\newtheorem{theorem}{Theorem}[section]
\newtheorem{lemma}[theorem]{Lemma}
\newtheorem{prop}[theorem]{Proposition}
\newtheorem{definition}[theorem]{Definition}
\newtheorem{remark}[theorem]{Remark}
\numberwithin{equation}{section}
\def \mr {{\mathbb R}}
\def \mcs {{\mathcal S}}
\def \mcm {{\mathcal M}}
\def \mca {{\mathcal A}}
\def \mce {{\mathcal E}}
\def \mcf {{\mathcal F}}
\def \mcl {{\mathcal L}}
\def \mcr {{\mathcal R}}
\def \mcv {{\mathcal V}}
\def \mn {{\mathbb N}}
\def \la {\lambda}   
\def \La {\Lambda}
\def \lap {\Delta}
\newcommand{\supp}{\text{supp }}
\newcommand{\diag}{\textrm{Diag}}
\newcommand{\inj}{\textrm{inj}}
\newcommand{\Id}{\operatorname{Id}}
\def \im {\operatorname{Im}}
\def \re {\operatorname{Re}}
\def \intx {\mathring{X}}
\def \p {\partial}
\def \ff {\text{ff}}
\def \fot {\frac{1}{2}}
\def \fnt {\frac{n}{2}}
\def \fracnt {\fnt}
\def \fnf {\frac{n^2}{4}}
\def \bpf{\begin{proof}}
\def \epf{\end{proof}}
\def \beq{\begin{equation*}}
\def \eeq{\end{equation*}}
\def\bsp{\begin{split}}
\def \mbr {\mathbb{R}}
\def \intxx {\intx\times\intx}
\def \xox {X\times_0 X}
\def \xo {\xox}
\def \xx {X\times X}
\def \hpl {H_{p_L}}
\def \hpr {H_{p_R}}
\def \ha {\frac{1}{2}}
\def \mc {\mathbb C}
\def \intd {\diag^\circ}
\def \Xh {X^2_{0, \hbar}}
\def \mck{\mathcal{K}}
\def \soh {\frac{\sigma}{h}}
\def \mcep {\mce_0}
\def \beqq {\begin{equation}}
\def \eeqq {\end{equation}}
\def \hql {H_{Q_L}}
\def \hqr {H_{Q_R}}
\def \mbb{\mathbb{B}}
\def \det {\text{det}}
\def \ltx {L^2(X)}
\def \las {\La_{S}}
\def \laex {\bar{\La}_{S}}
\def \betas {\beta_{S}}
\def \intxo{\text{int}(\xo)}
\def \lazero {\La_0}
\begin{document}
\title[Resolvent and Radiation Fields]{Resolvent and Radiation Fields on Non-trapping Asymptotically Hyperbolic Manifolds}
\author{Yiran WANG}
\email{wang554@purdue.edu}
\address{Department of Mathematics, Purdue University, 150 North University Street, West Lafayette, Indiana, 47907, USA}
\date{\today}

\begin{abstract}
We construct a semi-classical parametrix for the Laplacian on non-trapping asymptotically hyperbolic manifolds, which generalizes the construction of Melrose, S\'a Barreto and Vasy. As applications, we obtain high energy resolvent estimates and resonance free strips. Also, we prove that the Friedlander radiation field decays exponentially for initial data  in proper spaces. 
\end{abstract}

\maketitle

\section{Introduction}
Consider an $n + 1$ dimensional compact manifold $X$ with boundary $\p X$. The interior of $X$ is denoted by $\intx$. Let $\rho \in C^\infty(X)$ be a boundary defining function of $\p X$, i.e. $\rho > 0 \text{ in } \intx, \rho = 0 \text{ on } \p X \text{ and } d\rho \neq 0 \text{ on }\p X.$ When equipped with a Riemannian metric $g$, the manifold $(\intx, g)$ is called conformally compact if $G = \rho^2 g$ is non-degenerate up to $\p X$. If in addition $|d\rho|_G = 1$ at $\p X$, $(\intx, g)$ is called asymptotically hyperbolic. In this case, the sectional curvature approaches $-1$ along any curve toward $\p X$, see Mazzeo \cite{Ma1, Ma2}. As shown in \cite{Gr} (see also \cite{JS1}), for asymptotically hyperbolic manifolds, there is a product decomposition near $\p X$. That is, there exists a boundary defining function $x$ such that a neighborhood $U$ of $\p X$ is diffeomorphic to $[0, \epsilon)_x \times \p X$, and
\begin{equation}\label{prod}
g = \frac{dx^2 + H(x, y, dy)}{x^2},
\end{equation}
where $H$ is a family of Riemannian metrics on $\p X$ parametrized by $x$. It is pointed out in \cite{JS1} that the metric $g$ determines $x$ and $H$ up to a positive factor $f\in C^\infty(X)$. More precisely, the metric $g$ determines a conformal structure on $\p X$. 

The Laplace-Beltrami operator $\lap_g$ on $(\intx, g)$ is   essentially self-adjoint   on $\ltx$. The spectra of $\lap_g$ consists of essential spectrum $[\frac{n^2}{4}, \infty)$ and finite many eigenvalues in $(0, \frac{n^2}{4})$, see \cite{Ma1,Ma2, MM}. It is convenient to shift the spectrum and consider $P = \lap_g - \frac{n^2}{4}$, so that the essential spectrum of $P$ is $[0, \infty)$. For $\la \in \mc, \im\la < 0$, we use $\la^2$ as the spectral parameter and denote the resolvent by  
\begin{equation}
R(\la) = (\lap_g - \frac{n^2}{4} - \la^2)^{-1}.
\end{equation} 
By the spectral theorem, $R(\la)$ are bounded operators on $\ltx$ for $\im\la < -n/2$. From the work of Mazzeo and Melrose \cite{MM}, $R(\la)$ has a meromorphic continuation from $\im \la \ll 0$ to $\mc\backslash \frac{i}{2}\mn$, $\mn = \{1, 2, \cdots\}$ as bounded operators on weighted $L^2$ spaces. 
The poles of the continuation are called resonances. Guillarmou showed in \cite{G1} that the resolvent has a meromorphic continuation to $\mc$ if and only if the metric $g$ is asymptotically even. Otherwise, the points $\frac{i}{2}\mn$ are generally essential singularities of $R(\la)$.

In this work, we assume that $(\intx, g)$ is non-trapping, i.e. there is no complete geodesic contained in any compact set of $\intx$. 
The main goal of this work is to generalize a semi-classical parametrix construction of Melrose, S\'a Barreto and Vasy \cite{MSV} to asymptotically hyperbolic manifolds, and use it to study  the resolvent and radiation fields. Let $h = 1/|\re\la|$ and $\sigma = 1 + i \im\la/|\re\la|$, we  transform the problem to a semi-classical one by $P - \la^2 = h^{-2} P(h, \sigma)$, where 
\beq
P(h, \sigma) = h^2(\lap_g - \fnf) -\sigma^2.
\eeq
Roughly speaking, we look for an operator $G(h, \sigma)$ such that 
\beq
P(h, \sigma)G(h, \sigma) = \Id + E(h, \sigma),
\eeq
for $h\in (0, 1),$ and $\sigma$ in a box around $1 + i0$ i.e.
\beq
   \sigma \in \Omega_\hbar \doteq [1 - \epsilon, 1 + \epsilon]\times i [-Ch, Ch],
\eeq 
with some  $\epsilon >0$ small and $C > 0$. Here the remainder $E$ is of order $h^\infty$ as $h\rightarrow 0$. We call such an operator $G(h, \sigma)$ a semi-classical parametrix for the Laplacian. 

In \cite{MSV}, the authors constructed such a parametrix for the hyperbolic space with metric perturbations supported sufficiently close to the infinity. To gain a rough idea of the structure of the parametrix, we proceed with a simple example where the resolvent kernel is explicit. Consider the Poincar\'e ball model of the three dimensional hyperbolic space $(\mbb^{3}, g_0)$, where
\beq
\mbb^{3} = \{z \in \mbr^{3} : |z| < 1\}, \ \ g_0 = \frac{4dz^2}{(1 - |z|^2)^2}.
\eeq
The manifold is complete and without conjugate points, hence the distance function $r_0(z, z')$ is smooth away from $z = z' \in \intx$. Let $\lap_0$ be the Laplace-Beltrami operator and $P_0 = \lap_0 - 1$. The Schwartz kernel (trivialized by the volume form) of the resolvent $R_0(\la)$ is 
$
R_0(\la, z, z') = \frac{1}{4\pi} e^{-i\la r_0(z, z')} \sinh^{-1} r_0(z, z'),
$
see \cite{MSV} and the references there. Now for the semi-classical operator $P_{0}(h, \sigma) = h^2(\lap_0 - 1) - \sigma^2$, the Schwartz kernel of its inverse $G_{0}(h, \sigma)$ is simply 
\beqq\label{semiker}
G_0(h, \sigma, z, z') = h^{-2}\frac{e^{-i\soh r_0(z, z')}}{4\pi \sinh r_0(z, z')}.
\eeqq
This kernel is singular at $r_0 = 0$ i.e. $z = z'$, and is a smooth oscillatory function away from $z = z'$ with phase function $-\sigma r_0$. It is important to notice that  the kernel has  complicated compound asymptotics as $h\rightarrow 0, z, z'\rightarrow \p \mbb^3$ and $z = z'$. The authors of \cite{MSV} were able to describe this clearly by working on a specially designed semi-classical blown-up space. 

In  general, one expects the semi-classical parametrix to exhibit similar structures as $G_0(h, \sigma)$. The Schwartz kernel of $G(h, \sigma)$ can be regarded as a distribution (trivialized by the volume form) on $\intx \times \intx \times [0, 1)$. We follow \cite{MSV}  and work on a blown-up space $X^2_\hbar$ defined in Section 3. The blown-up space is a compact manifold with corners and has five boundary faces $\mcl, \mca, \mcr, \mcs, \mcf$, see Figure \ref{fig4}. When lifted to $X^2_\hbar$, the kernel of $G(h, \sigma)$ can be decomposed into two parts. The first part only captures the conormal singularities at the diagonal of $X^2_\hbar$. Such operators are pseudo-differential operators and denoted by $\Psi_{0, \hbar}^m(X)$ with $m$ indicating the order. The novelty of our analysis lies in the second part, which captures the asymptotic behavior of the kernel at  the boundary faces of $X^2_\hbar$. The kernel belongs to a class of oscillatory functions associated to a Lagrangian submanifold and conormal to boundary faces of $X^2_\hbar$. Such operators are denoted by $I_\hbar^{ a + i\soh, \mu - \kappa, c + i\soh}(X, \La)$, see Section 3 for detail explanations. The two operator spaces are combined to $\Psi_{0, \hbar}^{m, a + i\soh, \mu-\kappa, c + i\soh}(X, \La)$. Our main result is 
\begin{theorem}\label{main0}
Assume $h\in [0, 1), \sigma \in \Omega_\hbar$. There exist $\kappa \geq 0$ and two operators $G(h, \sigma)$ and $E(h, \sigma)$,  whose Schwartz kernels are holomorphic in $\sigma$ and satisfy
\beq
G(h, \sigma)\in \Psi_{0,\hbar}^{-2, \frac{n}{2} + i\soh, -\frac{n}{2} - 1 - \kappa, \frac{n}{2}+ i\soh}(X, \La), \ \  \beta_\hbar^*E(h, \sigma) \in \rho_\mcs^\infty\rho_\mcf^\infty \rho_\mca^\infty\rho_\mcr^\infty\rho_\mcl^{\fnt + i\frac{\sigma}{h}}C^\infty(\Xh),
\eeq
such that 
\begin{equation*}
P(h, \sigma)G(h, \sigma) = \Id + E(h, \sigma).
\end{equation*}
\end{theorem}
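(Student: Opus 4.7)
The plan is to construct $G(h,\sigma)$ in four stages, each one removing the error at a particular piece of the singular support of the kernel on $\Xh$, following the MSV template but with the non-trapping hypothesis replacing the near-infinity support condition.

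\textbf{Stage one: diagonal and front face.} First I would invert $P(h,\sigma)$ on the lifted diagonal using the semi-classical $0$-pseudodifferential calculus. Since $P(h,\sigma)$ is elliptic in $\Psi_{0,\hbar}^2$, a standard symbolic Borel sum produces $G^{(1)}\in\Psi_{0,\hbar}^{-2}(X)$ with $P(h,\sigma)G^{(1)}=\Id+E^{(1)}$ where $E^{(1)}$ is $C^\infty$ across the diagonal but still nontrivial at the boundary faces. Next, the normal operator of $P(h,\sigma)$ at the front face $\mcf$ is the model semiclassical problem on hyperbolic space, whose inverse is explicit via formula \eqref{semiker}. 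Inverting this normal operator and Borel-summing the expansion at $\mcf$ produces a correction $G^{(2)}$ so that the error vanishes to infinite order at $\mcf$.

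\textbf{Stage two: the outgoing Lagrangian.} At this stage the residual kernel is a smooth oscillatory function away from the diagonal, with phase $-\sigma r/h$ where $r$ is the Riemannian distance lifted to $\Xh$. I would define the Lagrangian $\La\subset T^*\Xh$ as the flowout, under the rescaled Hamilton vector field associated with $P(h,\sigma)$, of the conormal to the lifted diagonal. The non-trapping hypothesis is used here precisely to guarantee that every bicharacteristic escapes to a boundary face in finite time, so that $\La$ is a smooth, properly embedded Lagrangian meeting $\mcs$, $\mca$, $\mcl$, $\mcr$ transversally at the appropriate corners. I would then set up the geometric optics ansatz and solve the eikonal equation (which is $|d\phi|_g^2=\sigma^2$ on $\La$) and the transport equations for the symbol by integration along the Hamilton flow; non-trapping ensures these ODEs have global smooth solutions. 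This yields $G^{(3)}\in I_\hbar^{a+i\soh,\mu-\kappa,c+i\soh}(X,\La)$ representing the Lagrangian part of the parametrix.

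\textbf{Stage three: boundary indicial improvements and holomorphy.} Applying $P(h,\sigma)$ to the sum $G^{(1)}+G^{(2)}+G^{(3)}$ leaves an error whose Taylor expansions at $\mcs$, $\mcl$, $\mcr$, $\mca$ must be killed order by order. At $\mcs$ the leading-order equation is solved by the transport computation of Stage two; to kill the subprincipal Taylor terms one solves an inhomogeneous transport equation along the bicharacteristics, again using non-trapping. At $\mcl$ (and symmetrically $\mcr$), the indicial operator of $P(h,\sigma)$ has roots $\tfrac{n}{2}\pm i\soh$, and choosing the outgoing root $\tfrac{n}{2}+i\soh$ gives the exponent stated in the theorem; solving the indicial equation iteratively and then Borel-summing produces the infinite-order vanishing at $\mcl$ and $\mcr$. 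At the corner face $\mca$ the two expansions must be matched, which is what forces the weight $-\tfrac{n}{2}-1-\kappa$ with $\kappa\ge 0$ accounting for the loss incurred by the matching. Since every step (symbolic inversion, normal-operator inversion, transport ODE, indicial equation) depends holomorphically on $\sigma$ for $\sigma\in\Omega_\hbar$ (the symbols and phases are holomorphic in $\sigma$, and the Borel sums can be arranged to preserve holomorphy), the final kernel is holomorphic in $\sigma$ as required.

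\textbf{Main obstacle.} The hard part is the geometric analysis at the semi-classical face $\mcs$ and its corner with $\mcl$. Near $\mcs$ the Hamilton flow degenerates in the ambient $0$-calculus sense, and one must use the rescaled bicharacteristic flow adapted to the semi-classical-$0$ double space $\Xh$; showing that the flowout $\La$ extends as a smooth Lagrangian across $\mcs$ and has clean intersection with $\mcl\cap\mcs$ requires a careful verification that non-trapping gives uniform escape estimates on the rescaled flow. A secondary difficulty is propagating the oscillatory factor $e^{-i\sigma r/h}$ consistently through all Borel summations so that the error gains the advertised weight $\rho_\mcl^{n/2+i\soh}$ with holomorphic, not merely smooth, dependence on $\sigma$.
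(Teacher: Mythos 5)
Your proposal captures the overall MSV-style iterative strategy, and you correctly identify that the key new ingredient is the Lagrangian flowout with non-trapping guaranteeing escape of bicharacteristics. However, there are three substantive discrepancies with the paper's argument, two of which are genuine gaps.

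\textbf{Ordering of the boundary faces.} You propose to invert the $\mcf$-normal operator immediately after the diagonal step and before constructing the Lagrangian part. The paper (following MSV) proceeds in the order $\diag_\hbar \to \mcs \to \mca \to \mcf \to \mcl$, and this ordering is load-bearing. After the $\mcs$ and $\mca$ corrections, the remaining error at $\mcf$ is already $O(h^\infty)$, so the $\mcf$ step reduces to a \emph{non-semiclassical} Taylor-series argument in $\rho_\ff$ using Proposition~6.15 of Mazzeo--Melrose; one never needs a semiclassical inverse at $\mcf$. By doing $\mcf$ first, you would have to invert the full semiclassical normal operator $h^2(-(X\p_X)^2 + X^2\lap_{H(0,y')}) - \sigma^2$ uniformly down to $h=0$, which is exactly the problem that the explicit formula \eqref{semiker} solves for the model, but whose inverse kernel has nontrivial oscillatory structure at $\mca$, $\mcl$, $\mcr$ and at the corner with $\mcs$. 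You would then need the Lagrangian machinery \emph{before} you've set it up, and the expansions produced at the corners would not match the ones later obtained from the transport equations. For the restricted MSV setting (metric exactly hyperbolic near infinity) one can get away with this because the model resolvent \emph{is} the resolvent near the boundary; for a general non-trapping AH metric, this ordering does not close.

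\textbf{The origin of $\kappa$.} You attribute the loss $-\fnt-1-\kappa$ to ``matching expansions'' at the corner. In the paper, $\kappa$ is determined by the \emph{caustics} of the flowout Lagrangian $\La_0$: the parametrix at $\mca$ is built as a finite sum of oscillatory integrals with phases $\phi_l$ parametrizing $\La_0$, and the stationary phase estimate $I(m,h,\sigma) = O(h^{\mu - \fot\kappa_0})$ degrades by $\kappa_0 = \dim\bigl(T_{(m,\zeta)}\La_\phi \cap T_{(m,\zeta)}(\text{fiber})\bigr)$ at caustic points. When $r$ is globally smooth off the diagonal (the MSV case) there are no caustics and $\kappa = 0$. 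Your explanation misattributes the exponent and, more importantly, misses that $\kappa$ can vanish under a no-conjugate-points hypothesis — a structural fact the ``matching'' picture cannot reproduce.

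\textbf{The Lagrangian extension and parametrization.} Your Stage two correctly identifies what must be shown, and your ``main obstacle'' paragraph states that one must prove the flowout extends smoothly across the boundary faces with clean intersections. That is exactly right — but in the paper this is not an obstacle to be overcome in the parametrix construction, it is \emph{input} supplied by \cite{Pre}: Theorem~\ref{extension} gives the smooth extension $\laex$ after the singular change of variables $s = t + \log\rho_L + \log\rho_R$, Lemma~\ref{lacpt} gives compactness of $\laex|_{\sigma=1}$, and Proposition~\ref{lapara} gives a finite atlas of phase functions $\phi_k = -\log\rho_L - \log\rho_R + F_k$ with $F_k$ smooth up to $\p(\xo)$. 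Without invoking this parametrization (or reproving it), your Stage two has no concrete phases with which to write the transport equations, and you cannot deduce the crucial asymptotics $a_{l,j} = \rho_L^{\fnt}\rho_R^{\fnt}C^\infty$ that feed into the stated index $\fnt + i\soh$ at $\mcl$ and $\mcr$. In short: you've correctly located the hard part but left it unresolved, when the paper's point is that it is already resolved by the cited work and the remaining task is to carry out the geometric-optics bookkeeping in the coordinates adapted to $\xo$.

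Your discussion of the indicial roots $\fnt \pm i\soh$ at $\mcl$/$\mcr$ and of holomorphy in $\sigma$ is correct and agrees with the paper.
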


We remark that the index $-\fnt - 1- \kappa$ can be thought as the vanishing order of $G(h, \sigma)$ in $h$, and $\kappa$ is related to the caustics of the Lagrangian. When the Lagrangian has no caustics as in the case of \cite{MSV}, $\kappa = 0$ and the result is the same as Theorem 5.1 of \cite{MSV}. However, in general, there is a loss in the vanishing order.  The Lagrangian central to our analysis can be described as following. Let $(z, \zeta)$ be local coordinates of $T^*\intx$. Consider the Hamiltonian function $p(z, \zeta) = \ha |\zeta|_{g^*(z)}^2$, where $g^*$ denotes the dual metric. Let $p_L$ be the lift of $p$ to $T^*\intx\times T^*\intx$ from the left and $\hpl$ be the Hamilton vector field of $p_L$. Denote
\beq
\Omega = \{(z, z', \zeta, \zeta') \in T^*(\intx\times\intx) : z = z', \zeta = -\zeta', p(z, \zeta) = \ha\}.
\eeq
We consider a non-conic Lagrangian submanifold defined as the flow out of $\Omega$ under $\hpl$ i.e.
\beqq\label{lade}
\Lambda = \bigcup_{t \geq 0} \exp t \hpl (\Omega).
\eeqq
Some equivalent descriptions are discussed in Section 2. In fact, if the distance function $r_0$ is smooth away from the diagonal, for example in the case of $(\mbb^3, g_0)$ or geodesic convex asymptotically hyperbolic manifolds, we have an explicit picture
\beq
\La \backslash \Omega = \{(z, z', d_z r_0(z, z'), -d_{z'} r_0(z, z')) : z, z' \in \intx, z\neq z'\},
\eeq
see \cite{MSV} and \cite{Pre}. In view of \eqref{semiker}, it is clear that that $G_0(h, \sigma, z, z')$ is an oscillatory function associated to $\La$ away from the diagonal. The general case is similar. To understand the asymptotic behavior of $G_0$ at $\p (X\times X)$, the point of view of \cite{MSV} is to study the smooth extension of $\La$ to the boundary  in some properly designed rescaled vector bundles. In \cite{Pre}, the smooth extension is addressed for general non-trapping asymptotically hyperbolic manifolds by using   different techniques. Our local parametrization of the Lagrangian follows from the results of \cite{Pre}, and we review them in Section 2. Also, our treatment bears some similarily with that of Duistermaat on the Lagrangian level, see Section 5.2 of \cite{D}.

As in \cite{MSV}, the high energy i.e. $|\la|\rightarrow \infty$ behavior of the resolvent is an immediate consequence. 
\begin{theorem}\label{main1}
Let $\rho$ be a boundary defining function of $\p X$. Assume $a, b > \im\la, \ \ a + b \geq 0$. For any $C_2 > 0$, there exists $C_1 >0$ such that the weighted resolvent $\rho^aR(\la)\rho^b$ has a holomorphic continuation to 
\beqq
\{\la\in\mc: |\re\la| > C_1, \ \ \im\la < C_2\}
\eeqq
as bounded operators on $\ltx$. 
Moreover, there exist $C>0$ and $\kappa \geq 0$ such that
\begin{equation}\label{main1est}
\|\rho^a R(\la) \rho^b f\|_{\ltx} \leq C |\la|^{\fracnt - 1 + \kappa}\|f\|_{\ltx}. 
\end{equation}
\end{theorem}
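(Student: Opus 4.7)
The plan is to deduce Theorem~\ref{main1} from the semi-classical parametrix of Theorem~\ref{main0} by undoing the semi-classical rescaling and inverting the error term on a suitable weighted $L^2$ space.

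First I would pass to the semi-classical regime by setting $h = 1/|\re\la|$ and $\sigma = 1 + i\im\la/|\re\la|$. For $|\re\la| > C_1$ large and $\im\la < C_2$, one has $\sigma \in \Omega_\hbar$, so the factorization $P - \la^2 = h^{-2} P(h,\sigma)$ together with Theorem~\ref{main0} gives
\beq
P(h,\sigma) G(h,\sigma) = \Id + E(h,\sigma).
\eeq
Right-multiplying by $(\Id + E(h,\sigma))^{-1}$ (once invertibility is established) and rescaling yields $R(\la) = h^2\, G(h,\sigma)(\Id + E(h,\sigma))^{-1}$, so the estimate \eqref{main1est} will follow from (i) bounding $\rho^a G(h,\sigma) \rho^b$ on $\ltx$, and (ii) controlling $(\Id + E(h,\sigma))^{-1}$ on a matching weighted space.

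For step (ii), the description of $\beta_\hbar^* E(h,\sigma)$ from Theorem~\ref{main0} shows that its lift vanishes to infinite order at $\mcs, \mcf, \mca, \mcr$ and carries the index $\fnt + i\soh$ at $\mcl$, which is precisely the index matching multiplication by $\rho^{-b}$ on the left and $\rho^b$ on the right (the condition $b > \im\la$ ensures the real part of the effective left-face index stays positive after conjugation). A direct Schur-type bound on $\Xh$, using that the $\mca$-defining function is comparable to $h$, then gives $\|\rho^{-b} E(h,\sigma) \rho^b\|_{\ltx \to \ltx} = O(h^\infty)$. Hence for $h$ small enough, $\Id + E(h,\sigma)$ is invertible on $\rho^{-b}\ltx$ by a Neumann series, and its inverse is uniformly bounded.

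For step (i), the kernel of $G(h,\sigma)$ on $\Xh$ is the sum of a semi-classical $0$-pseudo-differential piece of order $-2$ supported near the lifted diagonal and an oscillatory piece associated to $\La$. The indices $\fnt + i\soh$ at $\mcl, \mcr$ match the natural $\ltx$ densities on $\intx$ (whose volume form weighs $\rho^{-n-1}$), so under the weight conditions $a, b > \im\la$ and $a + b \geq 0$, a Schur estimate on the blown-up double space gives $L^2$-boundedness of both pieces. The index $-\fnt - 1 - \kappa$ at the semi-classical face $\mca$ (whose defining function is $\sim h$) contributes a factor $h^{-\fnt - 1 - \kappa}$, and combining with the $h^2$ prefactor from the rescaling yields
\beq
\|\rho^a R(\la) \rho^b\|_{\ltx \to \ltx} \leq C h^{2} \cdot h^{-\fnt - 1 - \kappa} = C |\la|^{\fnt - 1 + \kappa},
\eeq
which is \eqref{main1est}. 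The holomorphic continuation follows because both $G(h,\sigma)$ and $E(h,\sigma)$ are holomorphic in $\sigma$ by Theorem~\ref{main0}, and the Neumann series preserves holomorphy.

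The main obstacle will be step (i): carefully tracking the densities and complex indices on $\Xh$ while performing the Schur estimate, and verifying that the oscillatory factor $e^{-i\soh r}$ carried by the Lagrangian distribution does not produce additional $h$-losses. This reduces to checking that on $\Omega_\hbar$ the imaginary part of $\sigma/h$ is bounded uniformly, so the phase remains genuinely oscillatory rather than exponentially growing, and the size of the kernel is controlled entirely by the boundary indices extracted from Theorem~\ref{main0}.
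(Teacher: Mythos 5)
Your proposal follows essentially the same route as the paper's Section 4.2: undo the semi-classical rescaling via $P-\la^2 = h^{-2}P(h,\sigma)$, invert $\Id + \rho^{-b}E\rho^b$ by Neumann series using the $O(h^\infty)$ Schur bound, split $G = G_0 + \tilde G$, and obtain the $h^{-\fnt-1-\kappa}$ bound for the oscillatory part from the $\mca$-index together with the index-matching Schur estimate at $\mcl,\mcr$ (the paper formalizes the latter as Lemma~\ref{est1}, quoted from \cite{MSV}). The only step you gloss over that the paper spells out is the Schur estimate near $\diag_\hbar$ in the polar coordinates $\rho_\mcs = r$, $\rho_\mca = h/r$, but the plan is correct and essentially identical.
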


For  asymptotically hyperbolic manifolds (not necessarily non-trapping), Guillarmou \cite{G} proved there is no resonance in a region exponentially close to the real axis. For conformally compact manifold with constant negative curvature outside a compact set, he obtained resonance free regions as in Theorem \ref{main1}. For asymptotically hyperbolic manifold with even metric (in the sense of Guillarmou \cite{G1}), Vasy \cite{V} developed a whole microlocal machinery to obtain strips of holomorphic continuation as well as high energy resolvent estimates. We remark that Theorem \ref{main1} does not require the asymptotically hyperbolic metric to be even to any order. The resolvent estimates can be used to obtain, for example, the exponential decay of local energy for linear wave equations on such manifolds.

As another application, we study the radiation fields in Section 5. The radiation fields were introduced by Friedlander \cite{F, F1} for the wave equations on asymptotically Euclidean spaces. They are concrete realizations of the Lax-Phillips translation representation of the wave group, see \cite{LP}. In \cite{SaW}, S\'a Barreto and Wunsch showed that for non-trapping asymptotically Euclidean and hyperbolic manifolds, the radiation fields are Fourier integral operators associated to the sojourn relations.  

S\'a Barreto \cite{Sa} studied the radiation fields in the asymptotically hyperbolic setting. Consider the linear wave equation
\beq
\begin{split}
&(D_t^2 - \lap_g + \frac{n^2}{4})u(t, z) = 0 \text{ on } \mr_+\times \intx, \\
& u(0, z) = f_1(z), \ \  D_tu(0, z) = f_2(z), \ \ f_1, f_2 \in C^\infty_0(\intx).
\end{split}
\eeq
When the product decomposition \eqref{prod} is valid, we use $z = (x, y)$. By Theorem 2.1 of \cite{Sa},
the forward radiation field $\mcr_+: C^\infty_0(\intx)\times C^\infty_0(\intx) \rightarrow C^\infty(\mr\times \p X)$ is defined as
\begin{equation*}
\mcr_+(f_1, f_2)(s, y) = x^{-\fracnt}D_t u(s - \log x, x, y)\mid_{x = 0}.
\end{equation*}
The backward radiation field $\mcr_-$ is defined by reversing the time direction.

Here, we are interested in the decay of $\mcr_+$ as $s\rightarrow \pm\infty$. For odd dimensional Euclidean spaces, the radiation fields are given by derivatives of the Radon transform of the initial data $f_1, f_2$, see Section 4.2 of Lax and Phillips \cite{LP}. If $f_1, f_2$ are rapidly decaying Schwartz functions, the radiation fields are rapidly decaying in $s$ as $s\rightarrow \pm \infty$. Baskin, Vasy and Wunsch \cite{BVW} recently studied the radiation fields on  perturbations of the Minkowski space $M$. One of their results is that if the dimension of $M$ is even, the radiation field 
decays like $s^{-\infty}$ as $s\rightarrow \infty$. 

Roughly speaking, our Theorem \ref{main2} says that for $\epsilon$ small, and $f_1, f_2$ have sufficent regularity and decay at infinity,
\beq
\mcr_{+}(f_1, f_2)(s, y) = e^{-\epsilon |s|} \mcr_0(s, y), 
\eeq
where $\mcr_0 \in L^\infty(\mbr_s; L^2(\p X))$ and smooth in $s\in\mbr\backslash\{0\}$. In particular, the radiation field decays exponentially as $s\rightarrow \pm\infty.$ 
As pointed out in \cite{Sa}, the radiation field is related to the transposed Eisenstein  function $\mce(\la)$,  
 while $\mce(\la)$ can be defined as the boundary value of  $\rho^{-\fracnt - i\la}R(\la)$ at $\rho = 0$. In the non-semi-classical setting, Joshi and S\'a Barreto \cite{JS1} gave a clear description of the kernel structure of $\mce(\la)$. 
Here we study $\mce(\la)$ for large $|\re\la|$, and show that $\mce(\la)$ has a holomorphic extension to strips. 
These are carried out in Section 5.

\section{The Lagrangian and its parametrization}
\subsection{The Lagrangian  submanifold and its extension. }
We review the main construction of S\'a Barreto and Wang \cite{Pre}. Let $(\intx, g)$ be a $n+1$ dimensional non-trapping asymptotically hyperbolic manifold. The cotangent bundle $T^*\intx$ is a symplectic manifold with the canonical $2$-from $\omega$. 
In local coordinates $(z, \zeta)$ of $T^*\intx$, 
\beq
\omega = \sum_{j = 1}^{n+1} d\zeta_j\wedge dz_j.
\eeq
A submanifold $\La$ of $T^*(\intx)$ is called Lagrangian if $\text{dim} \La = \text{dim} \intx$ and $\omega$ vanishes on $\La$. Let $g^*$ be the induced metric on $T^*\intx$. In particular, $g^*_{ij} = (g^{-1})_{ij} = g^{ij}$. Consider a Hamiltonian function $p(z, \zeta) = \frac{1}{2}|\zeta|_{g^*}^2 = \fot \sum_{i, j = 1}^{n+1} g^{ij}\zeta_i \zeta_j.$ The Hamilton vector field $H_p$ is defined through $\omega(H_p, \cdot) = - dp$. In local coordinates, 
\begin{equation*}
H_p = \frac{\p p}{\p\zeta}\cdot\frac{\p}{\p z} - \frac{\p p}{\p z}\cdot\frac{\p}{\p \zeta}.
\end{equation*} 
The integral curves of $H_p$ are called bicharacteristics. Denote the unit sphere bundle of $T^*(\intx)$ by $S^*(\intx)$, then $S^* (\intx) = \{(z, \zeta)\in T^*\intx: p(z, \zeta) = \ha\}.$ Since $H_p p = 0$, $H_p$ is tangent to $S^*(\intx)$ and the bicharacteristics stay in $S^*(\intx)$. It is a well-known fact that the projection of bicharacteristics to $\intx$ are  geodesics of $(\intx, g)$, see  Section 2.C.8 of \cite{GFL}. 

Now consider the product manifold $\xx$. Let $\pi_L, \pi_R: T^*X\times T^*X\rightarrow T^*X$ be the projection to the left, right factor respectively. With the natural identification of $T^*X\times T^*X$ and $T^*(X\times X)$, we also use $\pi_L$, $\pi_R$ for the projections $T^*(X\times X) \rightarrow T^*X$. The cotangent bundle $T^*(X\times X)$ is a symplectic manifold with canonical $2$-form $\tilde\omega$. In local coordinate $(z, \zeta, z', \zeta')$ of $T^*(\xx)$,
\beq
\tilde\omega = \pi_L^*\omega + \pi_R^*\omega = \sum_{j = 1}^{n+1} d\zeta_j\wedge dz_j + \sum_{j = 1}^{n+1} d\zeta'_j\wedge dz'_j.
\eeq 

Set $p_L = \pi_L^* p, p_R = \pi_R^*p$. The Hamilton vector fields are denoted by $\hpl, \hpr$ respectively. Let 
\begin{equation}\label{sdiag}
\Omega = \{(z, \zeta, z', \zeta') \in T^*(\intx\times\intx) : z = z', \zeta' = -\zeta,  |\zeta|_{g^*} = 1\}.
\end{equation}
Consider the flow out of $\Omega$ under $\hpl$ or $\hpr$, i.e.
\beq
\La_L = \bigcup_{t \geq 0} \exp t \hpl (\Omega), \ \ \La_R =  \bigcup_{t\geq 0} \exp t \hpr (\Omega). 
\eeq
It follows from the definition that $\tilde\omega$ vanishes on $\Omega$, a $2n+1$ dimensional submanifold. The Hamilton vector fields $\hpl, \hpr$ are non-degenerate and the symplectic form $\tilde\omega$ is invariant under their flow. So the flow out is a $2n+2$ dimensional manifold on which $\tilde \omega$ vanishes. Hence $\La_L, \La_R$ are Lagrangian submanifolds of $T^*(\intx\times\intx)$. 

It turns out that $\La_L = \La_R$ because for any $(z, \zeta), (z', \zeta') \in S^*(\intx)$  and $(z', \zeta') = \exp t H_p(z, \zeta)$ with $t \geq 0$, one can reverse the direction of the flow to get $(z, -\zeta) = \exp t H_p (z', -\zeta').$ By the commutativity of $\hpl, \hpr$, we  arrive at $\La_L = \La_R$. Moreover, we  define 
\begin{equation}\label{defla}
\La \doteq \bigcup_{t_1, t_2 \geq 0} \exp t_2 \hpl \circ \exp t_1 \hpr (\Omega),
\end{equation}
which is the same manifold as $\La_L$ and $\La_R$. 

Next we consider the wave operators
\beq
\square_L = \ha( D_t^2 - \lap_{g(z)}),\ \ \square_R = \ha(D_t^2 - \lap_{g(z')}) .
\eeq
The symbols of these operators are 
\beq
Q_L(t, z, z'; \tau, \zeta, \zeta') = \ha(\tau^2 - |\zeta|_{g^*(z)}^2), \ \ Q_R(t, z, z'; \tau, \zeta, \zeta') = \ha(\tau^2 - |\zeta'|_{g^*(z')}^2).
\eeq
We consider the flow out of 
\beq
\Sigma \doteq \{(t, z, z'; \tau, \zeta, \zeta')\in T^*(\mbr\times \intxx) : t = 0, z = z', \zeta = -\zeta', |\zeta|^2_{g^*} = \tau^2 \neq 0\}
\eeq
under $H_{Q_L}$ and $H_{Q_R}$ i.e.
\begin{equation}\label{defla1}
\bsp
\tilde \La \doteq \bigcup_{t_1, t_2 \geq 0} \exp t_2 H_{Q_L} \circ \exp t_1 H_{Q_R} (\Sigma).
\end{split}
\end{equation}
We check that $\tilde\La$ is a conic Lagrangian submanifold of $T^*(\mr\times \intx\times \intx)$. Recall that a Lagrangian $\La_{c} \subset T^*X\backslash 0\doteq \{(z, \zeta) \in T^*X : \zeta \neq 0\}$ is conic if $(z, \zeta)\in \La_{c} \Longrightarrow (x, t\zeta)\in \La_{c}, \forall t > 0.$ By the commutativity of $H_{Q_L}$ and $H_{Q_R}$, we have 
\beq
\tilde\La = \bigcup_{t_2 \geq 0} \exp t_2 H_{Q_L} (\Sigma) = \bigcup_{t_1 \geq 0}  \exp t_1 H_{Q_R} (\Sigma).
\eeq
Notice that $\Sigma$ is a $2n+2$ dimensional conic submanifold, and the canonical two form $d\tau\wedge dt + \tilde \omega$ vanishes on $\Sigma$. The symbols $Q_L, Q_R$ are homogeneous and the Hamilton vector fields $H_{Q_L}, H_{Q_R}$ are non-degenerate. Therefore, $\tilde \La$ is a conic Lagrangian submanifold. It is important to observe that the projection of $\tilde \La\mid_{\{\tau = 1\}}$ to $T^*(\intx\times\intx)$ is just $\La$. 

To describe the smooth extension of $\tilde\La$ in \cite{Pre},  we  recall the $0$-blown-up space defined by Mazzeo and Melrose \cite{MM}, see also \cite{MSV}. Let $\diag = \{(z, z') : z = z' \in X\}$, and 
\beq
\p \diag = \{(z, z') \in \p X\times \p X : z = z'\} = \diag\cap(\p X\times \p X).
\eeq
The $0$-blown up space, denoted by $X\times_0 X$, is defined as 
\beq
X\times_0 X = (X\times X\backslash \p\diag) \sqcup S^*_{++}(\p\diag),
\eeq
where $S^*_{++}(\diag)$ denotes the doubly inward pointing spherical bundle of $T^*_{\p\diag} (X\times X)$. Let 
$\beta_0: X\times_0 X \rightarrow X\times X$ be the blow-down map. Then $X\times_0 X$ can be equipped with a topology and smooth structure such that $\beta_0$ is smooth. The blown-up space has 3 boundary hypersurfaces. The left and right faces, denoted by $L$ and $R$, are defined as the closure of $\beta_0^{-1}(\p X\times \intx)$ and $\beta_0^{-1}(\intx\times \p X)$ respectively. The front face $\text{ff}$ is the closure of $\beta_0^{-1}(\p \diag)$. We denote the lifted diagonal  by $\diag_0$, which is  the closure of $\beta_0^{-1}(\diag\cap (\intx\times \intx))$.
See Figure \ref{fig1}.

\begin{figure}[htbp]
\centering
\scalebox{0.7}
{
\begin{pspicture}(0,-3.8529167)(17.765833,3.8729167)
\rput(3.7458334,-0.14708334){\psaxes[linewidth=0.04,labels=none,ticks=none,ticksize=0.10583333cm](0,0)(0,0)(4,4)}
\rput(0.74583334,-3.1470833){\psaxes[linewidth=0.04,labels=none,ticks=none,ticksize=0.10583333cm](0,0)(0,0)(4,4)}
\psline[linewidth=0.04cm,arrowsize=0.05291667cm 2.0,arrowlength=1.4,arrowinset=0.4]{->}(3.7458334,-0.14708334)(0.24583334,-3.6470833)
\psline[linewidth=0.04cm,dotsize=0.07055555cm 3.0]{*-}(2.2458334,-1.6470833)(4.7458334,3.1729167)
\psline[linewidth=0.04cm](0.74583334,0.85291666)(3.7458334,3.8529167)
\psline[linewidth=0.04cm](4.7458334,-3.1470833)(7.7458334,-0.14708334)
\rput(2.2458334,-1.6470833){\psaxes[linewidth=0.04,arrowsize=0.05291667cm 2.0,arrowlength=1.4,arrowinset=0.4,labels=none,ticks=none,ticksize=0.10583333cm]{->}(0,0)(0,0)(5,5)}
\usefont{T1}{ptm}{m}{n}
\rput(5.295833,2.6979167){$\diag$}
\usefont{T1}{ptm}{m}{n}
\rput(2.6,-2){$\p\diag$}
\usefont{T1}{ptm}{m}{n}
\rput(7.4058332,-1.8420833){$x'$}
\usefont{T1}{ptm}{m}{n}
\rput(2.0358334,3.3779166){$x$}
\usefont{T1}{ptm}{m}{n}
\rput(1.35,0.5179167){$X$}
\usefont{T1}{ptm}{m}{n}
\rput(4.1658335,-2.6820834){$X$}
\usefont{T1}{ptm}{m}{n}
\rput(0.97583336,-3.6020834){$y - y'$}
\rput(13.745833,-0.14708334){\psaxes[linewidth=0.04,labels=none,ticks=none](0,0)(0,0)(4,4)}
\rput(10.745833,-3.1470833){\psaxes[linewidth=0.04,labels=none,ticks=none](0,0)(0,0)(4,4)}
\psline[linewidth=0.04cm](13.745833,3.8529167)(10.745833,0.85291666)
\psline[linewidth=0.04cm](17.745832,-0.14708334)(14.745833,-3.1470833)
\psline[linewidth=0.04cm](11.865833,-2.0270834)(10.745833,-3.1470833)
\psarc[linewidth=0.04](13.425834,-2.0270834){1.56}{90.0}{180.0}
\psarc[linewidth=0.04](11.885834,-0.50708336){1.54}{268.5312}{1.27303}
\psline[linewidth=0.04cm](13.745833,-0.14708334)(13.425834,-0.48708335)
\psarc[linewidth=0.04](12.155833,-1.9770833){1.03}{28.855661}{82.11686}
\psline[linewidth=0.04cm,dotsize=0.07055555cm 3.0]{*-}(12.765833,-1.1470833)(14.725833,3.1129167)
\usefont{T1}{ptm}{m}{n}
\rput(15.245833,2.6579165){$\diag_0$}
\usefont{T1}{ptm}{m}{n}
\rput(14.445833,-1.9970833){\large $L$}
\usefont{T1}{ptm}{m}{n}
\rput(11.635834,0.8229167){\large $R$}
\usefont{T1}{ptm}{m}{n}
\rput(12.325833,-1.6020833){$\ff$}
\psline[linewidth=0.04cm,arrowsize=0.05291667cm 2.26,arrowlength=1.4,arrowinset=0.4]{->}(10.1258335,3.4129167)(8.105833,3.4129167)
\usefont{T1}{ptm}{m}{n}
\rput(9.155833,2.9029167){\large $\beta_0$}
\end{pspicture}
}
\caption{The $0$-blown-up space $X\times_0X$.}
\label{fig1}
\end{figure}
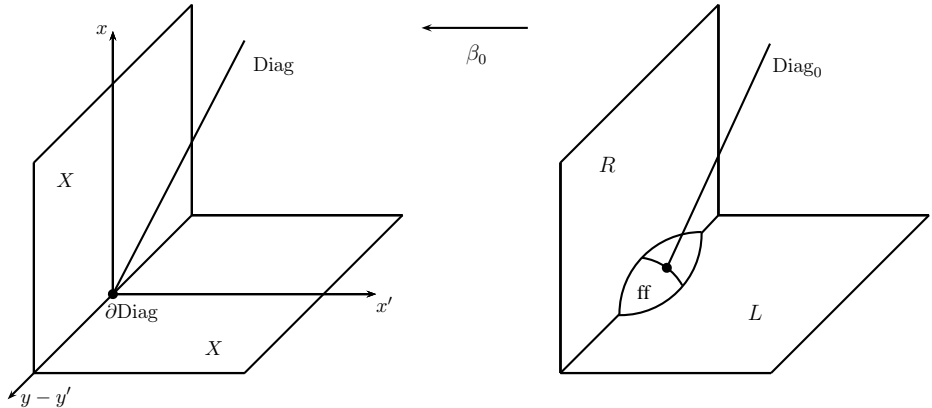

Since $\beta_0$ is a diffeomorphism when restricted to the interior of $\xo$ denoted by $\intxo$, it induces a symplecmorphism from $T^*(\intxo)$ to $T^*(\intx\times\intx)$, which we denote by $\beta_0^*$. We also use $\beta_0$ for $\mbr\times \xo \rightarrow \mbr\times X\times X$. So $\beta_0^*\tilde\La$ is a conic Lagrangian of $T^*(\mbr_t\times \xo)$. More explicitly, it can be expressed as
\beq
\beta_0^*\tilde \La = \bigcup_{t_1, t_2\geq 0} \exp t_2 \beta_0^*\hql \circ \exp t_1 \beta_0^*\hqr (\beta_0^*\Sigma).
\eeq
Here $\beta_0^*$ is also used for the lift of vector fields, i.e. $\beta_0^*\hqr = (\beta_0^{-1})_* \hqr$ in a conventional notation. 

Let $\rho_L, \rho_R$ be  boundary defining functions for the left, right faces respectively. Consider the following singular change of variables,
\beq
\bsp
\mcm : \mbr_t\times(\xo)\backslash (L\cup R) &\rightarrow \mbr_s \times\xo,\\
(t, m) &\rightarrow (s, m) \doteq (t + \log\rho_L(m) + \log \rho_R(m), m) .
\end{split}
\eeq
Let $\betas = \beta_0\circ\mcm$. The main result Theorem 1.1 of \cite{Pre} is
\begin{theorem}\label{extension}
Let $(\intx, g)$ be a non-trapping asymptotically hyperbolic manifold. The manifold $\las \doteq \betas^*\tilde\La$ can be smoothly extended up to the boundary of $T^*(\mbr_s\times\xo)$. 
\end{theorem}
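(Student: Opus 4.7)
The strategy is to verify, chart by chart, that the pullbacks $\betas^*\hql$ and $\betas^*\hqr$ extend to smooth vector fields on (a neighborhood of the boundary of) $T^*(\mr_s\times\xo)$ that are transverse to the boundary hypersurfaces, and then transport the lifted initial surface $\betas^{-1}(\Sigma)$ under their flow. Since $\tilde\La$ is, by \eqref{defla1}, the flow-out of $\Sigma$ under $\hql$ and $\hqr$ in the interior, a smooth extension of this flow immediately yields a smooth extension of $\las=\betas^*\tilde\La$.

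\textbf{Boundary computations.} Near a boundary point of $\p X$, use the product decomposition \eqref{prod}, in which the dual symbol reads $Q_L=\fot\bigl(\tau^2-x^2\xi^2-x^2|\eta|^2_{H^*(x,y)}\bigr)$ in coordinates $(t,x,y,x',y';\tau,\xi,\eta,\xi',\eta')$, and symmetrically for $Q_R$. The Hamilton field $\hql$ carries factors of $x^2$ in its $\p_\xi$ and $\p_\eta$ components, which encodes the infinite $t$-time needed to approach $\p X$. Lift to $\xo$ using projective charts near each face: near the interior of $L$ use $\rho_L=x/x'$ with $x',y,y'$ smooth, and near $\ff$ use $\rho_\ff=x+x'$, $X=x/\rho_\ff$, $Y=(y-y')/\rho_\ff$. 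In each chart $\beta_0^*\hql$ is a $0$-vector field vanishing to definite order at the corresponding face, with analogous behavior for $\beta_0^*\hqr$ after the obvious left/right symmetry.

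\textbf{Logarithmic shift and flow-out.} The change of variables $\mcm:(t,m)\mapsto(t+\log\rho_L(m)+\log\rho_R(m),m)$ is designed precisely to cancel these boundary vanishings: after pulling back by $\betas=\beta_0\circ\mcm$, one checks that both $\betas^*\hql$ and $\betas^*\hqr$ extend smoothly to the boundary of $T^*(\mr_s\times\xo)$ and remain non-degenerate there. The initial surface $\Sigma$ of \eqref{sdiag} sits over $\diag$ restricted to the interior, so $\betas^{-1}(\Sigma)$ lies over $\intxo\cap\diag_0$; in the natural fibered coordinates on $T^*(\mr_s\times\xo)$, it admits an obvious smooth extension up to $\diag_0$. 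Flowing this extended initial surface under the smoothly extended Hamilton fields produces a smooth submanifold of $T^*(\mr_s\times\xo)$ which, restricted to $s>0$, agrees with $\las$, giving the claimed extension.

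\textbf{Main obstacle.} The hardest step will be verifying uniform smoothness at the corners $L\cap\ff$ and $R\cap\ff$: one must show that the $0$-blow-up $\beta_0$ and the logarithmic reparametrization $\mcm$ interact consistently, so that in joint projective coordinates around these corners both $\betas^*\hql$ and $\betas^*\hqr$ become smooth and their joint flow foliates a neighborhood transversally to the boundary. The non-trapping hypothesis enters globally and essentially at this stage; it ensures that every integral curve of $\hpl$ issuing from $\Sigma$ escapes every compact subset of $\intx$, so the flow-out actually closes up at $L$ or $R$ within finite $s$-time rather than accumulating in $\intxo$ and spoiling smoothness at infinity.
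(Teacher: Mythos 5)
You have the right architecture: chart-by-chart verification, the logarithmic shift $\mcm$, extension of the initial surface $\betas^*\Sigma$ across the front face, flow-out under the lifted Hamilton fields, and the non-trapping hypothesis to guarantee that the flow escapes to $\p(\xo)$. The gap is in the central claim that, after pulling back by $\betas$, the vector fields $\betas^*\hql$ and $\betas^*\hqr$ "extend smoothly to the boundary of $T^*(\mbr_s\times\xo)$ and remain non-degenerate there," with the joint flow "foliat[ing] a neighborhood transversally to the boundary." That is false in the decisive direction. A short computation near the interior of $L$ (use $\rho_L = x$, away from $\ff$, and write $\xi = \tilde\xi + \sigma/x$ in the $\mcm$-coordinates) shows that $\betas^*Q_L$ vanishes to exactly first order in $\rho_L$. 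By the Leibniz identity $H_{fg} = fH_g + gH_f$ applied to $\betas^*Q_L = \rho_L\cdot(\rho_L^{-1}\betas^*Q_L)$, along the characteristic variety $\{\betas^*Q_L = 0\}$ (which contains $\las$) the field $\betas^*\hql$ therefore vanishes at $\rho_L = 0$ and is in fact tangent to the boundary hypersurface. Its integral curves take infinite flow-parameter to reach $\rho_L = 0$: the logarithmic shift makes the underlying geodesic arrive "at infinity" at finite $s$, but it does not rescale the flow time. So flowing $\betas^*\Sigma$ under the unrescaled fields does not by itself give a smooth closure at the boundary, and your argument stalls precisely where it needs to close.

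The missing ingredient — the crux of the paper's proof — is to rescale the Hamiltonians: set $q_L := \rho_L^{-1}\betas^*Q_L$ and $q_R := \rho_R^{-1}\betas^*Q_R$, which are smooth up to the boundary of $T^*(\mbr_s\times\xo)$ exactly because of the first-order vanishing noted above. On $\las$ one has, again by $H_{fg} = fH_g + gH_f$ and the vanishing of the symbol, $H_{q_L} = \rho_L^{-1}\betas^*H_{Q_L}$, so $H_{q_L}$ shares integral curves with $\betas^*\hql$ but with a parametrization that reaches the boundary in finite time; and (this is Lemma 2.1 of the cited reference \cite{Pre}) $H_{q_L}$ is transversal to $\mbr_s\times L$, with the analogous statement for $H_{q_R}$ at $\mbr_s\times R$. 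It is only for these rescaled fields that the flow-out of the extended initial surface closes up as a smooth submanifold with smooth intersection with the boundary. Combined with non-trapping — which ensures every integral curve reaches $\mbr_s\times\p(\xo)$ — this produces the smooth extension $\laex$. Without the rescaling and the Leibniz-rule observation, the proof you propose does not go through.
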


The idea of the proof is to observe that
\begin{gather*}
q_L = \frac{1}{\rho_L}\betas^*Q_L, \ \ q_R = \frac{1}{\rho_R}\betas^*Q_R
\end{gather*}
are smooth functions on $T^*(\mbr_s\times \xo)$, i.e. smooth up to the boundary. From the basic property of Hamilton vector fields $H_{fg} = fH_g + g H_f$ for any $f, g$ smooth, we conclude that $H_{q_L} = \frac{1}{\rho_L} \betas^*H_{Q_L}, H_{q_R} = \frac{1}{\rho_R} \betas^*H_{Q_R}$ on $\las$, because the symbols vanish there.  So the integral curves of $H_{q_L}, H_{q_R}$ are the same as those of $\betas^*H_{Q_L}, \betas^*H_{Q_R}$ but with different parametrizations. Therefore, we can write
\begin{gather*}
\las = \bigcup_{t_1, t_2 \geq 0} \exp t_1  H_{q_L} \circ \exp t_2  H_{q_R} (\betas^*\Sigma).
\end{gather*}
The initial set $\betas^*\Sigma$ can be extended to a smooth submanifold of $T^*(\mbr_s\times \xo)$ up to the front face. A key step Lemma 2.1 of \cite{Pre}  shows that the Hamilton vector fields $H_{q_L}$ and  $H_{q_R}$ are 
transversal to $\mbr_s\times L$ and $\mbr_s\times R$ respectively. With the non-trapping assumption, all the integral curves arrive at $\mbr_s \times \p(\xo)$ in a finite time. Hence $\las$ has a smooth extension, denoted by $\laex$, to the boundary of $T^*(\mbr_s\times \xo)$.  Again by the non-trapping assumption, $\laex$ is indeed an embedded submanifold, see e.g. Section 5.1 of Duistermaat \cite{D}. We remark that the proof of Theorem \ref{extension} is based on a modification of an idea from Melrose, S\'a Barreto and Vasy \cite{MSV}. 

Actually, $\las$ can be extended across the boundary as a Lagrangian in the following sense. Consider a collar neighborhood of $\p X$ as in \eqref{prod}. This requires a choice of the boundary defining function $x$. For $\tilde\epsilon > 0$, we can extend the collar neighborhood to $(-\tilde\epsilon, \epsilon)_x\times \p X$ which gives an extension $\tilde X$ of $X$ across the boundary. Then $\tilde X\times \tilde X$ extends $X\times X$ across the boundary and corner, and  the blow-up gives an extension of $\xo$, denoted by $\widetilde\xo$. In the proof of Theorem 1.1 of \cite{Pre}, we can extend the functions $q_L, q_R$ smoothly to $\widetilde\xo$, therefore, the flow out $\las$ is extended across the boundary as a Lagrangian submanifold. We remark that such extensions of $\las$ are not necessarily unique.

Let $\sigma$ be the dual variable of $s$. One can check that after the change of variable $\mcm$, $\sigma$ is equal to the dual variable $\tau$ of $t$. 

\begin{lemma}\label{lacpt}
$\laex|_{\{\sigma = 1\}}$ is a compact  submanifold of $\laex$.
\end{lemma}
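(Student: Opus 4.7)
The plan is to leverage the conservation of $\tau$ under the Hamiltonian flows and identify $\laex|_{\{\sigma=1\}}$ as the boundary extension of the flow-out of a compact set in finite rescaled parameter time.

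First, since $Q_L$ and $Q_R$ are independent of $t$, we have $\hql\tau = -\p_t Q_L = 0$ and similarly $\hqr\tau=0$. Thus $\tau$ is preserved along the flow, and $\tilde\La\cap\{\tau=1\}$ coincides with the flow-out of $\Sigma\cap\{\tau=1\}$ under $\hql,\hqr$. As noted just before the statement, the change of variable $\mcm$ satisfies $\mcm^*\tau = \sigma$, so
\beq
\las\cap\{\sigma=1\} = \betas^*(\tilde\La\cap\{\tau=1\}),
\eeq
and $\laex\cap\{\sigma=1\}$ is its extension to $\p T^*(\mbr_s\times\xo)$ produced by the proof of Theorem \ref{extension}.

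Next, I identify a compact initial set. The set $K_0 := \betas^*(\Sigma\cap\{\tau=1\})$ sits over the lifted diagonal $\diag_0$ at $s = \log\rho_L + \log\rho_R$ (since $t=0$ on $\Sigma$). Because $\diag_0$ is compact in $\xo$ and meets the boundary only in $\ff$, where $\rho_L$ and $\rho_R$ are strictly positive, the coordinate $s$ is continuous and bounded on $K_0$. The fiber condition $|\zeta|_{g^*}=1$, read in the rescaled $0$-cotangent frame in which $q_L = \rho_L^{-1}\betas^*Q_L$ and $q_R = \rho_R^{-1}\betas^*Q_R$ are smooth up to $\p\xo$, defines a compact sphere bundle over $\diag_0$. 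Hence $K_0$ has compact closure $\overline{K_0}$ in the compactification of $T^*(\mbr_s\times\xo)$ in which $\laex$ is a smooth submanifold.

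Third, by Lemma 2.1 of \cite{Pre} combined with the non-trapping assumption, every integral curve of $H_{q_L}$ (resp.\ $H_{q_R}$) starting at a point of $\overline{K_0}$ exits transversally through $\mbr_s\times L$ (resp.\ $\mbr_s\times R$) in a finite parameter time that depends continuously on the initial point, hence is uniformly bounded on the compact set $\overline{K_0}$ by some $T_L, T_R <\infty$. Since on $\laex$ the fields $\betas^*\hql, \betas^*\hqr$ differ from $H_{q_L}, H_{q_R}$ only by the positive smooth factors $\rho_L, \rho_R$, both prescriptions trace out the same Lagrangian, and
\beq
\laex\cap\{\sigma=1\} = \bigl\{\exp t_2\, H_{q_L}\circ\exp t_1\, H_{q_R}(p): p\in\overline{K_0},\ 0\leq t_1\leq T_R,\ 0\leq t_2\leq T_L\bigr\},
\eeq
a continuous image of the compact set $\overline{K_0}\times[0,T_R]\times[0,T_L]$, hence compact. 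It is a smooth submanifold of $\laex$ because $\sigma$ is a non-constant smooth function on $\laex$ (the conic structure of $\tilde\La$ in $\tau$ produces $\sigma$ as a fibering coordinate after $\mcm^*$), so $\{\sigma=1\}$ is a regular level set.

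The main obstacle is justifying rigorously the compactness of $\overline{K_0}$, which requires reading the fiber variables in the correct rescaled cotangent bundle over $\xo$, and confirming that the exit times $T_L, T_R$ remain continuous and bounded at the boundary portion of $\overline{K_0}$ lying over $\diag_0\cap\ff$. Both points reduce to the smoothness up to $\p\xo$ of $q_L, q_R$ established in \cite{Pre} together with standard ODE dependence on initial data, but they require careful bookkeeping in the $0$-calculus coordinates near the front face.
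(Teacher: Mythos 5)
Your proposal takes essentially the same approach as the paper: reduce via conservation of $\sigma$ to the flow-out of $\betas^*\Sigma|_{\{\sigma=1\}}$, then combine compactness of that initial set with finite exit times (transversality plus non-trapping) to conclude. The step you explicitly defer at the end --- compactness of $\overline{K_0}=\betas^*\Sigma|_{\{\sigma=1\}}$ up to the front face --- is in fact the substantive content of the paper's proof, and your abstract phrasing (``the fiber condition $|\zeta|_{g^*}=1$, read in the rescaled $0$-cotangent frame, defines a compact sphere bundle'') understates the subtlety: the singular change of variables $\mcm$ shifts the fiber coordinate by $\tau/X$, so the resulting level set over $\ff$ is no longer a sphere in any obvious sense. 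The paper verifies this in projective coordinates near $\ff\cap L$ and finds that $\betas^*\Sigma|_{\{\sigma=1\}}$ becomes $\{(\tilde\la+1)^2 + h(x',y',\mu)=1\}$, which is compact precisely because $h$ is non-degenerate; that computation should be carried out rather than asserted. The rest of your write-up usefully spells out two points the paper leaves implicit: that the exit times are uniformly bounded because they are continuous on the compact initial set, and that $\{\sigma=1\}$ is a regular level set, giving the submanifold structure.
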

\bpf
The Hamiltonian functions $Q_L, Q_R$ do not depend on $t$, thus $q_L, q_R$ do not depend on $s$. Therefore the fiber $\sigma$ is constant along the Hamiltonian flow, and $\laex\mid_{\{\sigma = 1\}}$ is the flow out of $\betas^*\Sigma\mid_{\{\sigma = 1\}}$.  By the non-trapping assumption, all the integral curves reach $\mbr_s\times L$ and $\mbr_s\times R$, and they arrive at a finite time because the vector fields are transversal to the corresponding boundary faces. Therefore, it suffices to show that $\betas^*\Sigma\mid_{\{\sigma = 1\}}$ is compact. We first observe that the base $\{(s, m) : s = \log\rho_L(m) + \log \rho_R(m), m \in \diag_0\}$ is a compact submanifold  of $\mbr_s\times \xo$ because $\diag_0$ is compact and $\rho_L, \rho_R$ are smooth. The fibers are compact for $m$ in a compact set of the interior of $\diag_0$ since the metric is non-degenerate there. So the point is to verify that the compactness holds up to the front face. 
 
Near $\p X$ where the product decomposition \eqref{prod} is valid, we take $z = (x, y)$ as the local coordinates and $\zeta = (\xi, \eta)$ as the dual variables. Let $(x', y', \xi', \eta')$ be the local coordinate of the other copy. Then 
\beq
\Sigma\mid_{\{\tau = 1\}} = \{(0, x, y, x, y; 1, \xi, \eta, -\xi, -\eta) : x^2 \xi^2 + x^2 h(x, y, \eta) = 1\}.
\eeq 
For the $0$-blow up, we can use the following projective coordinate valid near $\ff$ and $L$,
\beq
x', \ \ X = x/x', \ \ Y = (y - y')/x', \ \ y'.
\eeq
The induced symplectic change of variables is
\begin{gather*}
\beta_0^* : (x, y, x', y'; \xi, \eta, \xi', \eta') \rightarrow (X, Y, x', y'; \la, \mu, \la', \mu')\\
\text{ where } \la = x' \xi, \ \ \mu = x'\eta, \ \ \la' = \xi' + \xi X + \eta Y, \ \ \mu' = \eta + \eta'.
\end{gather*}
Then 
\beq
\beta_0^*\Sigma|_{\{\tau = 1\}} = \{(0, X, Y, x', y'; 1, \la, \mu, \la', \mu') : X = 1, Y = 0, \la' = \mu' = 0, \la^2 + h(x', y', \mu) = 1\}.
\eeq
In the projective coordinates, $X$ is a boundary defining function of the left face. As discussed in \cite{Pre}, we can take the singular change of variable $\mcm$ to be $s = t + \log X$. This is because we are away from the right face hence $\rho_R$ is smooth, and changing boundary defining functions only results in a diffeomorphism of $T^*(\mbr_s\times \xo)$. Thus
\begin{gather*}
\mcm^* :  (t, X, Y, x', y';\tau, \la, \mu, \la', \mu') \rightarrow (s, X, Y, x', y'; \sigma, \tilde\la, \mu, \la', \mu')\\
\text{ where } \sigma = \tau, \ \ \tilde \la = \la - \tau/X.
\end{gather*}
Therefore, 
\beq
\betas^*\Sigma|_{\{\sigma = 1\}} = \{(0, 1, 0, x', y'; 1, \tilde\la, \mu, 0, 0) : (\tilde \la + 1)^2 + h(x', y', \mu) = 1\}.
\eeq 
Since $h$ is non-degenerate, this shows that $\betas^*\Sigma\mid_{\{\sigma = 1\}}$ is compact up to $\ff$. 
\epf

\subsection{Parametrization of the Lagrangian.} 
A conic Lagrangian $\La_c \subset T^*X$ can be locally parametrized by non-degenerate homogeneous phase functions, see e.g. \cite{D}. So for any $(z_0, \zeta_0) \in \La_c$, there exists a conic neighborhood $\Gamma \subset T^*X$ such that 
\beq
\La_c \cap \Gamma = \La_\phi \doteq \{(z, d_z\phi) : d_\theta \phi = 0\}.
\eeq
The phase function $\phi$ is smooth on some conic set of $X\times \mbr^{N_0}$, homogenous of degree one in $\theta$, and non-degenerate in a sense that $d_{z, \theta}d_z \phi$ has rank $N_0$. The following result describes the parametrization of $\lazero = \beta_0^*\La$, which is a Lagrangian submanifold of $T^*(\intxo)$.

\begin{prop}\label{lapara}
There exists finitely many phase functions $\phi_k \in C^\infty(U_k\times \Theta_k)$ where $U_k$ form an open covering of $\intxo$,  and $\Theta_k$ are bounded subsets of  $\mbr^{N_k}, N_k \in \mn$, such that 
\begin{gather*}
\lazero = \bigcup_{k = 1}^{M_\phi} \La_{k},\ \ \La_k = \{(m, d_m \phi_k) : m\in U_k,  d_\theta \phi_k = 0\}.
\end{gather*}
Moreover, the phase functions satisfy
\beq
\phi_k(m, \theta) = -\log \rho_L(m) - \log \rho_R(m) + F_k(m, \theta), 
\eeq
where $F_k$ have smooth extensions to the boundary of $\xo$. 
\end{prop}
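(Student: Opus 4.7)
The plan is to derive the parametrization of $\Lambda_0$ from the smooth extension of the conic Lagrangian $\bar\Lambda_S$ given by Theorem~\ref{extension}, by combining the standard local parametrization of conic Lagrangians by homogeneous phase functions with the symplectic change of variables induced by $\mathcal{M}$.

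\emph{Step 1 (conic parametrization up to the boundary).} By Theorem~\ref{extension}, $\bar\Lambda_S$ is a smooth embedded conic Lagrangian submanifold of $T^*(\mathbb{R}_s\times \xo)$, extending smoothly up to and across the boundary of $\xo$. The classical theory of conic Lagrangians (see e.g.\ Duistermaat~\cite{D}, Chapter 3) then supplies, around every point of $\bar\Lambda_S$, a local non-degenerate phase function $\tilde\phi(s,m,\Theta)$, homogeneous of degree one in $\Theta\in\mathbb{R}^{N}\setminus 0$ and smooth in $(s,m)$ up to the boundary of $\xo$, with $\bar\Lambda_S$ locally equal to $\{(s,m,\partial_s\tilde\phi,\partial_m\tilde\phi):\partial_\Theta\tilde\phi=0\}$. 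By Lemma~\ref{lacpt}, the slice $\bar\Lambda_S\cap\{\sigma=1\}$ is compact, so a finite family of such charts indexed by $k=1,\ldots,M_\phi$ suffices to cover it.

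\emph{Step 2 (reduction to a non-conic phase at $\sigma=1$).} On the region where $\sigma=\partial_s\tilde\phi>0$, we apply the standard reduction of a conic Lagrangian. Writing $\Theta=(\lambda,\theta)$ with $\lambda>0$ the scaling direction and $\theta$ ranging over a bounded open subset of $\mathbb{R}^{N_k-1}$, homogeneity permits writing
\[
\tilde\phi(s,m,\lambda,\theta)=\lambda\bigl(s-\psi(m,\theta)\bigr)
\]
for some $\psi$ smooth in $(m,\theta)$ up to the boundary of $\xo$. Setting $\lambda=\sigma=1$ and using $\partial_\theta\psi=0$ on the critical set, the projection of $\bar\Lambda_S\cap\{\sigma=1\}$ onto $T^*\xo$ is parametrized by a non-homogeneous phase function in $(m,\theta)$ built from $\psi$, which is smooth up to the boundary.

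\emph{Step 3 (transfer to $\Lambda_0$ via $\mathcal{M}$).} The map $\mathcal{M}:(t,m)\mapsto(t+\log\rho_L+\log\rho_R,m)$ lifts to a symplectomorphism of cotangent bundles whose spatial component at $\sigma=\tau=1$ shifts the covector by $\pm d(\log\rho_L+\log\rho_R)$ (the precise form is computed explicitly at the front face in the proof of Lemma~\ref{lacpt}). Combining this with the identification of $\Lambda_0$ as the projection of $\tilde\Lambda|_{\{\tau=1\}}$ onto $T^*\xo$ through $\beta_0$ yields, on each patch $U_k\times\Theta_k$, a phase of the form
\[
\phi_k(m,\theta)=-\log\rho_L(m)-\log\rho_R(m)+F_k(m,\theta),
\]
where $F_k\in C^\infty(\overline{U_k}\times\Theta_k)$. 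Non-degeneracy of $\phi_k$ in the sense of the proposition is inherited from that of $\tilde\phi$, since adding a smooth function of $m$ alone preserves the rank of $d_{(m,\theta)}d_\theta\phi_k$.

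The main technical obstacle is the careful bookkeeping of sign and orientation conventions through the composition of three steps: the orientation of the Hamiltonian flow defining $\tilde\Lambda$, the singular shift $\mathcal{M}$, and the conic reduction at $\sigma=1$. The explicit coordinate computation in the proof of Lemma~\ref{lacpt} near the front face provides a concrete template, and its global assembly into the desired form relies only on the smooth extension supplied by Theorem~\ref{extension} and the compactness of $\bar\Lambda_S|_{\{\sigma=1\}}$.
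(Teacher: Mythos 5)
Your overall strategy matches the paper's (parametrize the extended conic Lagrangian $\laex$, use compactness of $\laex|_{\sigma=1}$ to pass to finitely many bounded charts, transfer via $\mcm$), but Step 2 has a genuine gap. You assert that "homogeneity permits writing $\tilde\phi(s,m,\lambda,\theta)=\lambda(s-\psi(m,\theta))$." Homogeneity only gives $\tilde\phi(s,m,\lambda,\theta)=\lambda\,\tilde\phi(s,m,1,\theta/\lambda)=:\lambda\,\Phi(s,m,\tilde\theta)$ for some smooth $\Phi$; it does not make $\Phi$ linear in $s$ with unit coefficient. That particular normal form is equivalent to the (nontrivial) geometric assertion that $s$ is a well-defined function on $\laex|_{\{\sigma=1\}}$, i.e.\ that the projection of the slice to $T^*\xo$ is injective. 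This is plausible here but needs a separate argument, and your proof does not supply one; if the projection had fold-type degeneracies the factored form would fail.

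The paper sidesteps this entirely with a small but essential trick that your proposal misses: instead of reducing the conic phase, it keeps $\psi_k(s,m,\theta)$ homogeneous and \emph{promotes $s$ to an additional phase variable}, setting
\[
\phi_k(m;\,s,\theta)=\psi_k(s,m,\theta)-s-\log\rho_L(m)-\log\rho_R(m).
\]
The new critical-point equations are $d_\theta\phi_k=d_\theta\psi_k=0$ together with $d_s\phi_k=d_s\psi_k-1=0$, and the latter automatically selects the slice $\sigma=1$ without ever needing a factored form. Then $d_m\phi_k=d_m\psi_k-d\log\rho_L-d\log\rho_R$ is exactly the covector shift implementing $\mcm^{-1}\circ$ (restriction to $\tau=1$), so $\phi_k$ parametrizes $\lazero$ and the desired splitting $\phi_k=-\log\rho_L-\log\rho_R+F_k$ holds with $F_k=\psi_k-s$ smooth up to $\p(\xo)$. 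To repair your proof, either replace Step 2 with this "add $s$ as a parameter" device, or supplement it with a lemma proving $s$ is a single-valued function on $\laex|_{\{\sigma=1\}}$ (e.g.\ via the non-trapping assumption and uniqueness of bicharacteristics through a given $(m,\nu)$).
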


\bpf
We start from a parametrization of  $\laex$. We know that $\laex$ is a conic Lagrangian of $T^*(\mbr_s\times\xo)$. Moreover, we can regard $\laex$ as a Lagrangian on some extension of $T^*(\mbr_s\times\xo)$ as discussed after Theorem \ref{extension}. Hence for any $(s_0, m_0; \sigma_0, \nu_0)\in\laex$, there exists a    neighborhood $I\times U$ of $(s_0, m_0)$ and a homogeneous non-degenerate phase function $\psi(s, m, \theta)$ on some conic set $I\times U \times \Theta \subset \mbr_s\times (\xo) \times \mbr^{N}$ such that 
\beq\label{eqlas}
\laex \cap T_{I\times U}^*(\mbr_s\times \xo) = \La_\psi = \{(s, m, d_s\psi, d_m \psi) : (s, m)\in I\times U, \ \ d_\theta\psi = 0\}.
\eeq
Because the phase function $\psi$ is non-degenerate, the map
\beq
i_\psi : \{(s, m, \theta) \in I \times U\times \Theta : d_\theta \psi = 0\} \rightarrow \laex \cap T_{I\times U}^*(\mbr_s\times \xo).
\eeq
is a diffeomorphism.  For any precompact neighborhood $W \subset \laex$ of $(s_0, m_0, 1, \nu_0) \in \laex\mid_{\{\sigma = 1\}}$, the preimage $i_\psi^{-1}(W)$ is precompact. By the compactness in Lemma \ref{lacpt},  $\laex\mid_{\{\sigma = 1\}}$ can be covered by a finite number of such precompact neighborhood $W_k$. Therefore, we find finitely many phase functions $\psi_k$ defined on  $I_k\times U_k \times \Theta_k$, such that 
\beq
\laex\mid_{\{\sigma = 1\}} = \bigcup_{k = 1}^{M_\phi} \La_{\psi_k}\mid_{\{\sigma = 1\}}. 
\eeq 
Here $I_k, \Theta_k$ are bounded sets because $\psi_k$ can be restricted to  precompact sets $i_{\psi_k}^{-1}(W_k)$.

Next, we get a parametrization of $\lazero$. We know that locally
\beq
\laex\mid_{\{\sigma = 1\}} = \{(s, m; 1, d_m\psi_k) : d_s\psi_k = 1, d_\theta \psi_k = 0\}. 
\eeq
By inverting the singular change of variable $\mcm$, we know for any $m\in \intxo$ that  
\begin{gather*}
(s, m; 1, \nu) \in \laex \text{ if and only if } (t, m; 1, \nu -\frac{d_m \rho_L}{\rho_L} - \frac{d_m \rho_R}{\rho_R} ) \in \beta_0^*\tilde \La.
\end{gather*}
By projecting to $T^*(\intxo)$, this is equivalent to $(m; \nu -\frac{d_m \rho_L}{\rho_L} - \frac{d_m \rho_R}{\rho_R} ) \in \lazero$. Now we define 
\beq
\phi_k(m, s, \theta) = \psi_k(s, m, \theta) - s - \log \rho_L(m) -\log \rho_R(m).
\eeq
Here we regard $s$ as a parameter in $\phi_k$. It is clear that
\begin{gather*}
d_s\phi_k = d_s\psi_k - 1, \ \ d_\theta\phi_k = d_\theta\psi_k,\\
d_m \phi_k = d_m\psi_k - \frac{d_m \rho_L}{\rho_L} - \frac{d_m \rho_R}{\rho_R}.
\end{gather*}
So locally, $\phi_k$ is a phase function parametrizing $\lazero$, i.e.
\beq
\lazero \cap T^*_{U_k}(\text{int}(\xo)) = \{(m, d_m\phi_k) : m\in U_k, \ \ d_s\phi_k = 0, d_\theta\phi_k = 0. \}
\eeq
Since $U_k$ covers $\intxo$, the proof is finished by taking $\Theta_k$ as $I_k\times \Theta_k$ and $F_k$ as $\psi_k$. 
\epf

\begin{remark}\label{rmkext}
In the above proof, we can get another covering of $\La_0$ by restricting $\phi_k$ to a smaller parameter set. First of all, we can shrink the precompact set $W_k$ so that they still cover $\laex|_{\{\sigma = 1\}}$ and $i_{\psi_k}^{-1}(W_k) =  \tilde I_k\times U_k\times \tilde \Theta_k$, where $\tilde I_k, \tilde \Theta_k$ are precompact in $I_k, \Theta_k$. Denote the restriction of $\psi_k$ by $\tilde \psi_k$. Then $\La_{\tilde \psi_k} \subset \La_{\psi_k}$ and $\laex|_{\{\sigma = 1\}}$ is covered by $\La_{\tilde \psi_k}$. By continuing the proof, we obtain the restriction $\tilde \phi_k$ of $\phi_k$ to $U_k\times (\tilde I_k\times \tilde \Theta_k)$ so that $\La_{\tilde\phi_k} \subset \La_{\phi_k}$ and $\La_0$ is covered by $\La_{\tilde \phi_k}$. This remark is useful in the proof of Lemma \ref{geo}.
\end{remark}

\section{Operators on Semi-classical Spaces}

The semi-classical blown-up space $X^2_{0, \hbar}$ can be constructed in a quite similar way as in Section 3 of \cite{MSV}.  On $X\times_0 X\times [0, 1)$, the set $\diag_0\times[0,1)$ intersect $X\times_0 X\times \{0\}$ transversally. We blow up this intersection to obtain $X_{0, \hbar}^2$, and let $\beta_{0, \hbar}: X_{0, \hbar}^2 \rightarrow X\times_0 X\times [0, 1)$
 be the blow down map. The final blow down map is $\beta_\hbar = \beta_{0, \hbar}\circ\beta_0 : X_{0, \hbar}^2 \rightarrow X\times X\times [0, 1)$. As a compact manifold with corners, $X_{0, \hbar}^2$ has five boundary faces, see Figure \ref{fig4}. The left, right faces, denoted by $\mcl, \mcr$, are the closure of $\beta_{0,\hbar}^{-1} (L \times[0, 1) ), \beta_{0, \hbar}^{-1}(R \times[0, 1))$ respectively.  The front face $\mcf$ is the closure of $\beta_{0, \hbar}^{-1}(\ff\times [0, 1) \backslash (\p \diag_0 \times \{0\})).$ The semiclassical front face $\mcs$ is the closure of $\beta_{0, \hbar}^{-1}(\diag_0\times\{0\})$. Finally, the semiclassical face $\mca$ is the closure of $\beta_{0,\hbar}^{-1}( (X\times_0 X\backslash \diag_0)\times \{0\})$. The lifted diagonal denoted by $\diag_\hbar$ is the closure of  $\beta_{0, \hbar}^{-1}(\diag_0\times(0, 1))$. See Figure \ref{fig4}.
\begin{figure}[htbp]
\centering
\scalebox{0.6} 
{\begin{pspicture}(0,-4.58)(19.76,5.0)
\psline[linewidth=0.04cm](5.58,0.0)(9.74,0.0)
\psline[linewidth=0.04cm](1.58,-4.0)(5.98,-3.98)
\psarc[linewidth=0.04](4.58,0.0){1.0}{0.0}{90.0}
\psarc[linewidth=0.04](0.58,-4.0){1.0}{0.0}{90.0}
\psline[linewidth=0.04cm,linestyle=dashed,dash=0.16cm 0.16cm, arrowsize=0.05291667cm 2.0,arrowlength=1.4,arrowinset=0.4]{->}(0.78,-3.8)(-0.42, -5)
\psline[linewidth=0.04cm](4.58,1)(4.58,5)
\psline[linewidth=0.04cm](0.58,-3)(0.58,1)
\psline[linewidth=0.04cm](0.58,-3)(4.58,1)
\psline[linewidth=0.04cm](1.58,-4)(5.58,0)
\pspolygon[linewidth=0.04](1.29,-3.29)(5.29,0.71)(6.4,3.8)(2.48,0.0)
\usefont{T1}{ptm}{m}{n}
\rput(7.76,3.505){\large $\text{Diag}_0\times[0, 1)_\hbar$}
\usefont{T1}{ptm}{m}{n}
\rput(0.7,-4.4){$h$}
\psline[linewidth=0.04cm](15.58,0.0)(19.74,0.0)
\psline[linewidth=0.04cm](11.58,-4.0)(15.98,-3.98)
\psarc[linewidth=0.04](14.58,0.0){1.0}{63.434948}{90.0}
\psarc[linewidth=0.04](10.58,-4.0){1.0}{0.0}{90.0}
\psline[linewidth=0.04cm](10.58,-3)(10.58,1)
\psline[linewidth=0.04cm](14.58,1)(14.58,5)
\psline[linewidth=0.04cm](10.58,-3)(14.58,1)
\psline[linewidth=0.04cm](11.58,-4)(15.58,0)
\pspolygon[linewidth=0.04](11.29,-3.29)(15.1,0.48)(16.17,3.15)(12.48,-0.2)
\usefont{T1}{ptm}{m}{n}
\rput(11.11,3.585){\large $\beta_{0,\hbar}$}
\usefont{T1}{ptm}{m}{n}
\rput(11.8,-0.165){\large $\text{Diag}_\hbar$}
\usefont{T1}{ptm}{m}{n}
\rput(15.99,-2.335){\large $\mcl$}
\usefont{T1}{ptm}{m}{n}
\rput(12.45,1.665){\large $\mcr$}
\usefont{T1}{ptm}{m}{n}
\rput(12.48,-2.755){\large $\mcf$}
\psline[linewidth=0.04cm,linestyle=dashed,dash=0.16cm 0.16cm](15.02,0.88)(16.0,3.58)
\psline[linewidth=0.04cm](15.46,0.5)(16.56,3.28)
\psarc[linewidth=0.04](15.24,0.72){0.3}{146.30994}{324.4623}
\psarc[linewidth=0.04](16.27,3.43){0.31}{145.30484}{347.4712}
\psarc[linewidth=0.04](14.58,0.0){1.0}{0.0}{29.47589}
\psline[linewidth=0.04cm,arrowsize=0.05291667cm 2.26,arrowlength=1.4,arrowinset=0.4]{->}(12.1258335,4)(10.105833,4)
\usefont{T1}{ptm}{m}{n}
\rput(15.72,1.545){\large $\mcs$}
\usefont{T1}{ptm}{m}{n}
\rput(18.25,1.385){\large $\mca$}
\end{pspicture} }
\caption{The semiclassical blown-up space $X^2_{0, \hbar}$ obtained from $X\times_0 X\times [0,1)$.}
\label{fig4}
\end{figure}
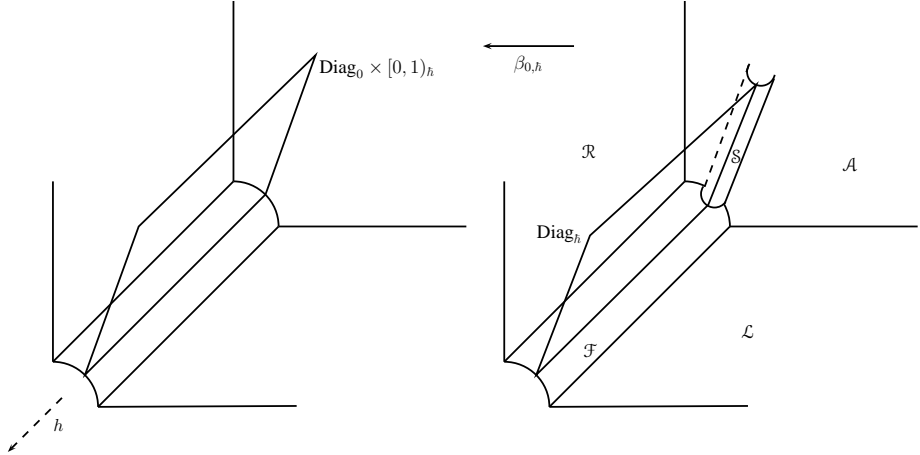

The volume form $dg$ of $(\intx, g)$ can be used to trivialize the distribution densities and give a well-defined distribution space $C^{-\infty}(\intx)$. In local coordinates, $dg = \sqrt{\det g} dz$. Let $P$ be an operator from $C_0^\infty(\intx)$ to $C^{-\infty}(\intx)$ with Schwartz kernel $K_P(z, z', h)$, then
\begin{equation*}
Pf(z, h) = \int K_P(z, z', h) f(z') dg(z'), \ \ f\in C_0^\infty(\intx).
\end{equation*}
In the rest of the paper, we often do not distinguish the notation of an operator and its Schwartz kernel. It should be clear from the context which one is referring to.  

By definition, the pseudo-differential operator space $\Psi^m_{0,\hbar}(X)$ consists of operators $P : C_0^\infty(\intx) \rightarrow C^{-\infty}(\intx)$ such that $\beta_\hbar^*K_P(z, z', h)$ is a conormal distribution of order $m$ to  $\diag_\hbar$, vanishing to infinite order at all faces,  except on $\mcf$ where it is smooth with distribution values, and on $\mcs$ where $\rho_\mcs^{n+1}\beta_\hbar^*(K_P)$ is smooth with distribution values. 

Next we define a class of oscillatory functions associated to $\lazero$, which is motivated by Definition 5.2.1 of Duistermaat \cite{D}. To fit in our context, we change the large parameter $\tau$ in the original definition by $1/h$.
\begin{definition}\label{oscfun}
Let $\Lambda$ be an immersed $C^\infty$ Lagrangian submanifold in $T^*X$. An oscillatory function $u(z, h)$ associated to $\La$ of order $\mu$ is a locally finite sum of integrals of the form
\begin{equation*}
I(z, h) = \int e^{-i\frac{\phi(z, \theta)}{h}} b(z, \theta, h) d\theta, \ \ \theta\in \mr^N, 
\end{equation*}
where $\Lambda_\phi = \{(z, d_z\phi) \in T^*X: d_\theta\phi = 0\}$ is a piece of $\Lambda$, $d_{x, \theta}d_\theta \phi$ has rank $N$ at  $d_\theta\phi = 0$, and
\begin{gather*}
b(z, \theta, h)\sim \sum_{j = 0}^{\infty} b_j(z, \theta) h^{\mu - \frac{1}{2}N + j},
\end{gather*}
where $b(z, \theta, h)$ vanishes for $\theta$ outside a fixed compact set of $\mr^N$.
\end{definition}
As discussed at the end of Section 5.2 of \cite{D}, the oscillatory integral $I(z, h) = O(h^{\mu - \ha \kappa_0})$ locally uniformly near $z_0$ as $h \rightarrow 0$, where $\kappa_0$ is the dimension of the intersection of the tangent space $T_{(z_0, \zeta_0)} (\La_\phi)$ and the fiber of the cotangent bundle $T_{(z_0, \zeta_0)}(\text{fiber})$ with $\zeta_0 = d_z\phi(z_0, \zeta_0)$. In particular,  $\kappa_0 \leq \text{dim}(X)$, and the points of $\La$ where $\kappa_0\neq0$ are called caustics.

Before we define the oscillatory functions, we discuss the parametrization of $\lazero$ near the diagonal. For any $z_0\in \intx$, there is an open neighborhood $U_0$ such that the distance function $r(z_0, z)$ is smooth on $U_0\backslash \{z_0\}$. It is proved in the Appendix that the injectivity radius of $(\intx, g)$ is positive. So there exists an $\epsilon_1 > 0$ such that $r$  is smooth in an open neighborhood $\{0 < r < \epsilon_1\}$ of $\intd = \diag\backslash \p\diag$. In this neighborhood, the situation is exactly the same as in \cite{MSV} where the distance function is globally smooth away from $\intd$. Moreover, near $\Omega$ we have
\beq
\La\backslash \Omega = \{(z, z'; d_z r, -d_{z'}r) : z, z' \in \intx, z\neq z'\}.
\eeq 
So the Lagangian can be parametrized by the distance function near $\intd$. When lifted to $\xo$, $\beta_0^*r$ has a smooth extension to $\ff$, and vanishes quadratically at $\diag_0$, see \cite{MSV} and the introduction of \cite{Pre}. Despite of its singularity at $\diag_0$, it is preferable to take the distance function $\beta_0^*r$ as the phase function in a neighborhood of $\diag_0$, because it appears when we construct the semi-classical parametrix near the semi-classical front face $\mcs$ in Section 3. It is worth mentioning that,  as done in \cite{MSV}, we can blow up $X\times_0X$ along $\diag_0$ to get a new space $X\times_1 X$. Let $\beta_1: X\times_1 X\rightarrow X\times X$ be the blow down map, and $D$  be the new boundary face. Then $\beta_1^*r$ is a smooth boundary defining function of face $D$ in $X\times_1 X$.

Denote by $\mcv_b(\Xh)$ the set of smooth vector fields tangent to  $\mcl,\mcr,\mca$ faces of $\Xh$. In the following definition, we need a conormal function space as in \cite{MSV}, see also \cite{MM},
\beq
\mck^{a, b, c}(\Xh) = \{u \in C^\infty(\Xh) : V_1V_2\cdots V_m u \in \rho_\mcl^a\rho_\mca^b\rho_\mcr^c\rho_\mcs^{-n-1}C^\infty(X_{0,\hbar}^2), V_i \in \mcv_b(\Xh), m\in \mn\}. 
\eeq
To simplify things, we also say $u\in \mck^{a, b, c}(\Xh)$ if the lift of $u$ to $\Xh$ belongs to $\mck^{a, b, c}(\Xh)$.

\begin{definition}\label{osc}
Let $(\intx, g)$ be a $n+1$ dimensional non-trapping asymptotically hyperbolic manifold, and $\La$ be the Lagrangian submanifold defined in \eqref{defla}.  An oscillatory function $u(m, \sigma, h)$ on $\text{int}(\xo)$ associated to $\lazero = \beta_0^*\La$ belongs to  $I_\hbar^{a + i\soh, \mu - \kappa, c+ i\soh}(X, \La)$, if it is a locally finite sum of oscillatory integrals  
\begin{equation*}
I(m, h, \sigma) = \int e^{-i\frac{\sigma}{h}\phi(m, \theta)} b(m,  \theta, h, \sigma) d \theta, \ \ h\in(0, 1), \sigma \in \Omega_\hbar,
\end{equation*}
and that 
\beqq\label{equ}
u \in \rho_L^{i\soh}\rho_R^{i\soh}  \mck^{a, \mu-\kappa, c}(\Xh).
\eeqq
Here away from $\diag_0$, the phase function $\phi$ is as in Proposition \ref{lapara}, and near $\diag_0$, $\phi = \beta_0^*r$. The amplitude function $b(m, \sigma,  \theta, h)\sim \sum_{j = 0}^{\infty} b_j(m, \sigma,  \theta) h^{\mu - \ha N_0 + j}$ is compactly supported in $\theta$, where $N_0$ is the rank of  $d_{m, \theta} d_\theta \phi$ at $d_\theta\phi = 0$. The constant $\kappa \geq 0$ is determined by the caustics of $\La_0$. 
\end{definition}

We remark that the factor $\rho_L^{i\soh}\rho_R^{i\soh}$ in \eqref{equ} comes from the asymptotics of the phase function. The amplitude function may be singular at $\diag_0$. 
Finally, we define 
\beq
\Psi_{0, \hbar}^{m, a + i\soh, \mu - \kappa, c + i\soh}(X, \La) = \Psi_{0, \hbar}^m(X) + I_\hbar^{a + i\soh, \mu-\kappa, c + i\soh}(X, \Lambda),
\eeq
meaning the collection of operators $P = P_1 + P_2$ such that $P_1\in\Psi_{0, \hbar}^m(X)$ and the lift of the kernel $\beta_0^* K_{P_2}\in I_\hbar^{a + i\soh, \mu - \kappa, c + i\soh}(X, \Lambda)$. This is the space where our parametrix belongs to.

\section{Semiclassical Parametrix and Resolvent Estimates}

\subsection{Construction of a parametrix. }
In this section, we prove Theorem \ref{main0}. That is for $h\in (0, 1), \sigma \in \Omega_\hbar$, there exist  two operators $
G(h, \sigma)\in \Psi_{0,\hbar}^{-2, \fnt + i\soh, -\frac{n}{2} - 1 - \kappa, \frac{n}{2} + i\soh }(X, \La)$  and $E(h, \sigma)$ with $\beta_\hbar^*E \in  \rho_\mcs^\infty \rho_\mca^\infty \rho_\mcf^\infty \rho_\mcl^\infty \rho_\mcr^{\frac{n}{2} + i\frac{\sigma}{h}}C^\infty(X^2_{0, \hbar})$, both of which are holomorphic in $\sigma$ and such that 
\begin{equation*}
P(h, \sigma)G(h, \sigma) = \Id + E(h, \sigma).
\end{equation*}

The strategy of the proof is the same as in  Melrose, S\'a Barreto and Vasy \cite{MSV} by successively removing singularities of the resolvent kernel from $\diag_\hbar$ and asymptotics at boundary faces $\mcs, \mca, \mcf$ and $\mcl$. We refer to the original paper for full details. The novelty here is at the semiclassical face $\mca$. To make it clear, we divide the proof to five subsections. 

\subsubsection{At $\diag_\hbar$.}

We begin by looking for the normal operators of $P(h, \sigma)$ at $\mcf$ and $\mcs$ faces. Near $\p X$  where the product decomposition \eqref{prod} is vaild, we use $(x, y)$ as the local coordinates for $X$. The Laplace-Beltrami operator is
\begin{equation}\label{eq4.1.1}
\lap_g = -(x\frac{\p}{\p x})^2 +nx\frac{\p}{\p x} - x^2\gamma\frac{\p}{\p x} + x^2\lap_H, 
\end{equation}
where $\gamma = \p_x\sqrt{\det H} /\sqrt{\det H}$, and $\lap_H$ is the positive Laplacian on $\p X$ parametrized by $x$. To get rid of the first order term, we conjugate $P(h, \sigma)$ by $x^{\frac{n}{2}}$ to get
\begin{equation*}
\begin{split}
Q(h, \sigma) \doteq x^{-\frac{n}{2}}P(h, \sigma)x^{\frac{n}{2}} 
& = h^2(-(x\p_x)^2 -\frac{n}{2}x\gamma -x^2\gamma\p_x + x^2\lap_H) - \sigma^2. 
\end{split}
\end{equation*}

Now we find the lift of the operator on $\Xh$. First, consider the $0$-blow up. Let $(x', y')$ be the local coordinates of the right factor of $X\times X$ near $\p X$. Then the center of the $0$-blow up is $\{ x = x' = 0, y = y' \}$. Near $\ff$ and $L$, we can use projective coordinates
\begin{equation}\label{cord1}
X = x/x', \ \ Y = (y - y')/x', \ \ x',\ \ y'.
\end{equation} 
Here $X$ is a boundary defining function for $L$ and $x'$ is a boundary defining function for $\ff$. From $x\p_x = X\p_X$, we find that the lift of $Q(h, \sigma)$ is
\begin{equation*}
\begin{split}
\beta_0^* (Q(h, \sigma)) & = h^2(-(X\p_X)^2 - \frac{n}{2}X x' \gamma - X^2 x' \gamma \p_X + X^2\lap_{H(x' X, y' + x' Y)}(D_Y)) - \sigma^2. 
\end{split}
\end{equation*}
Here $\lap_H(D_Y)$ means the derivatives in $\lap_H$ are in $Y$ variable. When restricted to $\ff = \{x' = 0\}$,  the normal operator is
\begin{equation}\label{eqnf}
\mathcal{N}_\ff(Q) = h^2(-(X\p_X)^2 + X^2\lap_{H(0, y')}(D_Y)) - \sigma^2,
\end{equation}
which is the (semi-classical) Laplace operator on the fiber space 
\beq
\{X \geq 0, Y\in \mr^n\} \text{ with } \ g_0 = \frac{dX^2 + H(0, y', dY)}{X^2}
\eeq
parametrized by $y'$. 

Next consider the semi-classical blow up. The center is given by $\{X = 1, Y = 0, h = 0\}.$ We use projective coordinates,
\begin{equation}\label{cords}
h,\ \ X_\hbar = \frac{X - 1}{h},\ \ Y_\hbar = \frac{Y}{h},\ \ x',\ \  y'.
\end{equation}
Then $\p_X = \frac{1}{h}\p_{X_\hbar}, \p_Y = \frac{1}{h}\p_{Y_\hbar}$, and the lift of $Q$ is
\begin{equation*}
\begin{split}
\beta^*_\hbar(Q(h, \sigma)) 
&= -((1 + hX_\hbar)\p_{X_\hbar})^2 -\frac{n}{2}h^2(1+ hX_\hbar) x' \gamma - h(1+ hX_\hbar)^2 x' \gamma\p_{X_\hbar} \\
&+ (1+ hX_\hbar)^2\lap_{H( x' (1+ hX_\hbar), y' + x' hY_\hbar)}(D_{Y_\hbar}) - \sigma^2.
\end{split}
\end{equation*}
When restricted to the front face $\mcs = \{h = 0\}$, the normal operator is
\begin{equation*}
\mathcal{N}_\mcs(Q(h, \sigma)) = -\p_{X_\hbar}^2 + \lap_{H(x', y')}(D_{Y_\hbar}) - \sigma^2,
\end{equation*}
which is the Laplacian on the fiber space 
\beq
(X_\hbar, Y_\hbar)\in \mr^{n+1} \text{ with } g_{e} = dX^2_\hbar + \sum_{i, j = 1}^n H_{ij}(x', y') dY_{\hbar, i}dY_{\hbar, j},
\eeq
parametrized by $(x', y')$. If $\sigma \in \Omega_\hbar$,  only $(\re\la)^2$ shows up in the normal operator. But this is not an important issue.

It remains to find the normal operator at $\mcs$ but away from $\mcf$. Here the conjugation by $x^{\fnt}$ does not play a role. Let $z$ be the local coordinates of $X$, and the metric $g = \sum_{i, j = 1}^{n + 1}g_{ij}dz_idz_j$. The center of the semi-classical blow up is $\{ z = z', h = 0\}$. We take the projective coordinates
\beq
Z_\hbar = \frac{z - z'}{h}, \ \ h,\ \  z',
\eeq 
which is valid over the interior of the semi-classical front face $\mcs$. Then
\begin{equation*}
\beta_\hbar^*(P(h, \sigma)) = -\frac{1}{\sqrt{\det g}}\frac{\p}{\p Z_{\hbar, i}}(\sqrt{\det g} g^{ij}\frac{\p}{\p Z_{\hbar, j}}) -\frac{n^2}{4}h^2 -\sigma^2.
\end{equation*}
When restricted to $\mcs = \{h = 0\}$, 
\begin{equation*}
\mathcal{N}_\mcs(P) = -\frac{1}{\sqrt{g}}\frac{\p}{\p Z_{\hbar, i}}(\sqrt{g} g^{ij}\frac{\p}{\p Z_{\hbar, j}}) - \sigma^2,
\end{equation*}
which is the Laplacian on the fiber space $Z_\hbar \in \mr^{n+1} \text{ with } g_e = g(z'), $ parametrized by $z'$.

Finally, it is evident that $\beta_\hbar^*Q$ is an elliptic operator uniformly up to $\mcf$ and $\mcs$ faces. By doubling the manifold $X_{0, \hbar}^2$ across the $\mcf$ and $\mcs$ faces, we get an elliptic differential operator of order $2$ on a manifold without boundary. By the standard parametrix construction of elliptic operators on compact manifold without boundary, there exist $G_0(\sigma)\in\Psi_{0, \hbar}^{-2}(X)$ and  $E_0(\sigma)\in \Psi_{0, \hbar}^{-\infty}(X)$ holomorphic in $\sigma$ such that
$P(h, \sigma)G_0(\sigma) = \Id + E_0(\sigma).$ Moreover, $G_0(\sigma), E_0(\sigma)$ can be arranged to be supported near $\diag_\hbar$.  

\subsubsection{At $\mcs$ face.}
We look for $G_1(\sigma)$ such that 
\beq
P(h, \sigma)G_1(\sigma) - E_0(\sigma) =  E_1(\sigma) \in \rho_\mcs^\infty C^\infty(X^2_{0, \hbar}) .
\eeq
The idea is that since $E_1$ is smooth up to $\mcs$, we can write down the Taylor expansion of $E_1$ in $\rho_\mcs$ and remove the coefficients using the normal operator $\mathcal{N}_\mcs(Q)$. This can be carried out in a sufficiently small neighborhood of $\mcs$, where the lift of the geodesic distance function $r$ is well defined. Therefore, the same construction in Section 5 of \cite{MSV}, which uses essentially the distance function, works here. The result is that we can find $G_1, E_1$ whose Schwartz kernels satisfy
\beq
G_1 \in e^{-i\frac{\sigma}{h}r}\rho_\mcs^{-n-1}\rho_\mca^{-\fnt - 1}C^\infty(\Xh),\ \ E_1 \in e^{-i\soh r}\rho_\mcs^\infty\rho_\mca^{-\fnt}C^\infty(X^2_{0, \hbar}),
\eeq
and are supported in a neighborhood of $\diag_\hbar$  away from $\mcl, \mcr$ faces. Notice that the center of the semi-classical blow up is given by $\{\beta_0^*r = 0, h = 0\}$. So near the corner $\mcs\cap \mca$, we can use projective coordinates and set $\rho_\mcs = \beta_0^*r, \rho_\mca = h/\beta_0^*r$. Therefore, it becomes clear that $G_1 \in I_\hbar^{\infty, -\fnt - 1, \infty}(X, \La)$.

\subsubsection{At $\mca$ face.}
Since $E_1$ vanishes to infinite order at $\mcs$, we may blow down $\Xh$ to $\xo\times [0, 1)$ and regard $E_1$ as in $I_\hbar^{\infty, -\fnt - \kappa, \infty}(X, \La)$. 
We can take $\rho_\mca = h$, and to remove the asymptotics of $E_1$ at $\mca$ is equivalent to remove the coefficients of Taylor expansions  of $E_1$ in $h$ to $O(h^\infty)$. The main result is
\begin{lemma}\label{geo}
For $h\in (0, 1), \sigma \in \Omega_\hbar$, there exists $G_2 \in I_\hbar^{\fnt + i\soh, -\fnt-1-\kappa, \fnt + i\soh}(X, \Lambda)$ holomorphic in $\sigma$ such that 
\begin{equation}\label{approx}
P(h, \sigma)G_2 - E_1 = E_2 \in  h^\infty I_\hbar^{\fnt+i\soh, \infty, \fnt + i\soh}(X, \Lambda).
\end{equation}
\end{lemma}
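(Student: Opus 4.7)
The plan is a WKB, eikonal-and-transport construction on $\La$, using the phase functions from Proposition \ref{lapara} together with $\beta_0^{*}r$ near $\diag_0$. Fix the finite cover $\{(U_k,\phi_k)\}_{k=1}^{M_\phi}$ of $\lazero$ from that proposition and a subordinate partition of unity $\{\chi_k\}$. On each chart seek a contribution
\beq
G_{2,k}(m,h,\sigma) \;=\; \int e^{-i\frac{\sigma}{h}\phi_k(m,\theta)}\, a_k(m,\theta,h,\sigma)\, d\theta,
\eeq
with amplitude admitting a formal expansion $a_k \sim \sum_{j\geq 0} h^{-\fnt-1-N_k/2+j} a_{k,j}(m,\theta,\sigma)$, compactly supported in $\theta$. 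Globally define $G_2$ as the sum of the $\chi_k$-localized pieces.

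Applying $P(h,\sigma)=h^2(\lap_g-\fnf)-\sigma^2$ and using $h\p_m e^{-i\sigma\phi_k/h} = -i\sigma(\p_m\phi_k)e^{-i\sigma\phi_k/h}$ gives
\beq
P(h,\sigma)G_{2,k} \;=\; \int e^{-i\frac{\sigma}{h}\phi_k} \bigl[\, \sigma^2(|d_m\phi_k|_{g^*}^2-1)\,a_k \;+\; h\,L_{\phi_k}a_k \;+\; h^2\lap_g a_k\,\bigr]\, d\theta,
\eeq
where $L_{\phi_k}$ is a first-order operator whose principal part is $H_p$ evaluated on $(m,d_m\phi_k)$. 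The leading bracketed term vanishes at $d_\theta\phi_k=0$ because $\La_{\phi_k}\subset\La\subset\{p=\fot\}$ forces the eikonal equation $|d_m\phi_k|_{g^*}^2=1$. Using non-degeneracy of $\phi_k$ to integrate by parts in $\theta$, the remainder reorganizes as a formal power series in $h$; matching coefficients of $h^j$ in $P(h,\sigma)G_{2,k}\equiv \chi_kE_1$ modulo $O(h^\infty)$ yields a triangular hierarchy of transport equations $T a_{k,j} = s_{k,j}$, in which $T$ is a first-order ODE operator along $\hpl$ (equivalently $\hpr$) on $\La$ and $s_{k,j}$ is determined by $a_{k,0},\ldots,a_{k,j-1}$ together with the $j$-th Taylor coefficient of $E_1$ at $\mca$.

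Each transport equation is solved by integration along the bicharacteristic flow. Since $\La$ is the flow-out of $\Omega$ under $\hpl,\hpr$, and by the non-trapping assumption together with Lemma 2.1 of \cite{Pre} (transversality at $\mcl,\mcr$) and the smooth extension of $\La$ from Theorem \ref{extension}, every bicharacteristic reaches the boundary of $\xo$ in finite time, yielding $a_{k,j}$ smooth on $\lazero$ up to $\mcf,\mcl,\mcr$. The initial data at $\Omega$ is read off from matching with $G_1$ near $\diag_\hbar$, i.e.\ from the $\mcs$-asymptotics of $E_1$. A Borel summation in $h$ assembles $\{a_{k,j}\}$ into an actual amplitude $a_k(m,\theta,h,\sigma)$, and the class membership $G_2\in I_\hbar^{\fnt+i\soh,\,-\fnt-1-\kappa,\,\fnt+i\soh}(X,\La)$ follows: the boundary weights $\rho_L^{i\sigma/h}\rho_R^{i\sigma/h}$ come directly from $e^{-i(\sigma/h)(-\log\rho_L-\log\rho_R)}$ (using $\phi_k=-\log\rho_L-\log\rho_R+F_k$ from Proposition \ref{lapara}), while the conormal prefactors $\rho_L^{\fnt},\rho_R^{\fnt}$ come from the half-density normalization combined with the $x^{\fnt}$-conjugation introduced at the beginning of this section. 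Holomorphy of $G_2$ in $\sigma$ is automatic because each $a_{k,j}$ depends algebraically on $\sigma$.

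The main obstacle is the presence of caustics of $\La$, i.e.\ points where $T\La$ meets the cotangent fibre non-trivially and no single non-degenerate phase function parametrizes $\La$ as a graph. At such a point stationary phase yields only $O(h^{\mu-\kappa_0/2})$ in place of $O(h^\mu)$, as noted after Definition \ref{oscfun}. This is accommodated by choosing the cover $\{(U_k,\phi_k)\}$ provided by Proposition \ref{lapara} so that on each chart $\phi_k$ is non-degenerate in $\theta$ (finiteness of the cover being guaranteed by the compactness of $\laex|_{\{\sigma=1\}}$ from Lemma \ref{lacpt}), and by absorbing the worst pointwise half-power loss globally into the constant $\kappa\geq 0$, producing the order $-\fnt-1-\kappa$ in the output class. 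When $\La$ has no caustics, as in \cite{MSV}, $\kappa=0$ and the construction reduces to theirs.
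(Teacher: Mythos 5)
Your overall strategy is the same as the paper's: a WKB/geometric-optics construction on $\lazero$ using the phase functions of Proposition~\ref{lapara} together with $\beta_0^*r$ near $\diag_0$, an eikonal equation forced by $\La\subset\{p=\tfrac12\}$, a hierarchy of transport equations solved by integration along the bicharacteristic flow (with finiteness of the flow guaranteed by non-trapping and transversality at the faces), Borel summation, and absorbing the stationary-phase loss at caustics into $\kappa$. The one place where your argument has a real gap is in the crucial determination of the boundary weights $\rho_L^{\fnt}$ and $\rho_R^{\fnt}$. You attribute these to ``the half-density normalization combined with the $x^{\fnt}$-conjugation,'' but neither is the source: Schwartz kernels here are trivialized by the volume form $dg(z')$ (no half-densities are in play), and the $x^{\fnt}$-conjugation appears earlier only to compute the normal operators at $\mcf$ and $\mcs$. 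In the paper these powers come out of the transport equations themselves: one writes $\phi_l=-\log\rho_L-\log\rho_R+F_l$, works in local coordinates near $L$, near $L\cap\ff$, and near the corner $L\cap\ff\cap R$, and shows in each chart that the lifted transport operator has the form $2\rho_L\p_{\rho_L} - n + \rho_L\cdot C^\infty$ (using $\lap_g\phi_l=-n+\rho_L C^\infty$ and $\xi=\p_x\phi_l=-1/x+C^\infty$, etc.); an indicial analysis then yields $a_{l,j}=\rho_L^{\fnt}C^\infty$, and the $\rho_R^{\fnt}$ factor follows by the symmetry of $E_1$ (it is built in polar coordinates and hence symmetric in $(z,z')$). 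Without this computation the claim that $G_2$ lands in the stated class $I_\hbar^{\fnt+i\soh,\,-\fnt-1-\kappa,\,\fnt+i\soh}(X,\La)$ is unjustified.

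A smaller imprecision: you state that the initial data for the transport hierarchy is ``read off from matching with $G_1$ near $\diag_\hbar$,'' but the correct prescription is zero initial data at $\diag_0$ — the matching with the diagonal parametrix has already been accomplished by $G_1$, and $E_1$ appears as the \emph{source} in the transport equations, not as initial data. Also, your partition of unity should live on the Lagrangian (covering $\lazero$ via the pieces $\La_{\tilde\phi_l}$ as in Remark~\ref{rmkext}) rather than on the base, since the $a_{l,j}$ are obtained as restrictions of globally defined functions $\hat a_j$ on $\La_0$ and must then be extended off the critical sets before Borel summation; this is what ensures each $a_{l,j}$ is compactly supported in $\theta$.
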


\begin{proof}
We follow the classical geometric optics method. On $\xo\times [0, 1)$, we know that $E_1 = h^{-\fnt}  e^{-i\soh r}\tilde E_1$, where $\tilde E_1$ is supported in a  small neighborhood of $\diag_0$ and has an asymptotic expansion in $h$. Formally, we can write $E_1$ as a locally finite sum of oscillatory integrals 
\begin{equation*}
E_1 = \sum_{l = 1}^{M_\phi} \int_{\Theta_l} e^{-i\frac{\sigma}{h}\phi_l(m, \theta)} b_l(m, \theta, \sigma, h) d\theta, \ \ m \in U_l, \theta\in \Theta_l, 
\end{equation*}
where the phase functions $\phi_l$ are as in Definition \ref{osc}, $M_\phi$ is the number of phase functions, $U_l$ forms an open cover of $\intxo$, $\Theta_l$ are bounded sets in $\mbr^{M_l}$ and the amplitude $b_l(m, \theta, \sigma, h)\sim \sum_{j = 0}^\infty b_{l, j}(m, \theta, \sigma)h^{-\fnt - \frac{N_l}{2} + j}$ are compactly supported in $\theta$. In fact, $b_l$ vanish if $\phi_l \neq \beta_0^*r$. Our task is to find 
\begin{equation}\label{eqg2}
G_2 = \sum_{l = 1}^{M_\phi} \int_{\Theta_l} e^{-i\frac{\sigma}{h}\phi_l(m, \theta)}a_l(m, \theta, \sigma, h) d\theta,
\end{equation}
where $a_l(m, \theta, \sigma, h) \sim \sum_{j = 0}^{\infty} a_{l,j}(m, \theta, \sigma) h^{-\fnt - 1 - \frac{N_l}{2} + j}$ compactly supported in $\theta$, such that 
\begin{equation}\label{eqints}
\sum_{l = 1}^{M_\phi}\int_{\Theta_l} ( P(h, \sigma) e^{-i\frac{\sigma}{h}\phi_l(m, \theta)}a_l(m, \theta, \sigma, h) - e^{-i\frac{\sigma}{h}\phi_l(m, \theta)}b_l(m, \theta, \sigma, h) )d \theta = O(h^\infty).
\end{equation}

From the standard geometric optics method, we find with some calculation that over $\intx\times \intx$, the phase functions $\phi_l$ should satisfy the eikonal equation $p(z, d_z\phi_l) -\ha = 0$ for $p(z, \zeta) = \ha |\zeta|_{g^*}^2$. The $a_{l, j}$ should satisfy transport equations along the integral curves of the Hamilton vector field 
\begin{gather}
-2H_p(z, d_z \phi_l) a_{l, 0} + (\lap_g \phi)a_{l, 0} = b_{l, 0}, \label{trans1}\\
-2H_p(z, d_z \phi_l) a_{l, j} + (\lap_g \phi)a_{l, j} = \frac{i}{\sigma}(\lap_g - \frac{n^2}{4}) a_{l, j-1} + b_{l, j}, \ \ j = 1, 2, \cdots. \label{trans2}
\end{gather}
Here $H_p(z, d_z \phi_l)$ denotes the restriction of $H_p$ to the Lagrangian. Actually, we consider solving the equations on $\xo$, hence all of these equations should be lifted to $\xo$. 

Since $\phi_l$ parametrizes $\lazero$ locally,  they satisfy the lifted eikonal equation at the critical sets  $C_l \doteq \{(m, \theta) \in U_l\times \Theta_l : d_\theta \phi_l = 0\}$. The transport equations are first order linear ODEs along the integral curves of $\beta_0^*H_p$. To solve them, we regard the $b_{l, j}$ as globally defined functions $\hat b_j$ on $\La_0$. This is because $b_{l, j}$ is well defined on $\La_0$ where $\phi_l = \beta_0^*r$ and they vanish otherwise. By imposing zero initial conditions at $\diag_0$, the equations can be solved holomorphically in $\sigma$ to get $\hat a_j$ globally defined on $\La_0$. From the non-trapping assumption, the integral curves approach the left and right faces of $\xo$.  We now determine the asymptotics of $\hat a_j$   near the left face $L$. It suffices to do the computations locally in the following three types of regions which cover a neighborhood of $L$.

{\em Region 1: Near $L$ and away from $\ff$.} This avoids the $0$-blow up, and we can take $(x, y, z')$ as the local coordinates near  $\p X\times X$. The Laplace operator is found in \eqref{eq4.1.1}. From Proposition \ref{lapara}, the phase function in such a region can be written as $\phi_l(x, y, z', \theta) = -\log x + F(x, y, z', \theta)$ with $F$ smooth up to $\p X\times X$. Then $\lap_g \phi_l = -n + xC^\infty$, by which we mean that $\lap_g\phi_l = -n + x \tilde F$ with $\tilde F$ smooth where in concern. This abbreviation is used throughout the rest of the proof to simplify notations. 

We can use the product structure of $X\times X$ and consider the Hamiltonian as $p(x, y, \xi, \eta) = \fot(x^2 \xi^2 + x^2 h(x, y, \xi, \eta))$. Then
\beq
H_p  = \xi x^2 \p_x - (x\xi^2 + x h + \fot x^2 \p_x h)\p_\xi + \fot x^2 H_h.
\eeq
On $\La$, we have $\xi = \p_x \phi_l = -1/x + C^\infty$. Therefore,
$
-2 H_p (z, d_{z}\phi_l ) = 2 x \p_x   + x C^\infty.
$
Locally, we write $\hat a_j = a_{l, j}$, hence the first transport equation \eqref{trans1} becomes
\beq
-2 H_p a_{l, 0} + (\lap_g \phi_l) a_{l, 0} = 2 x \p_x a_{l, 0} - n a_{l, 0} + xa_{l, 0} C^\infty = 0,
\eeq
because $E_1$ is supported away from $x = 0$. By a simple indicial analysis, we conclude that $a_{l, 0} = x^{\fnt} C^\infty$.  For $a_{l, 1}$, the right hand side of \eqref{trans2} is
\beq
\frac{i}{\sigma}(\lap_g - \frac{n^2}{4}) a_{l, 0} = x^{\fnt + 1} C^\infty.
\eeq
So the same argument can be repeated to find $a_{l, j}  = x^{\fnt} C^\infty$.  

{\em Region 2: Near $L\cap \ff$ and away from $R$.} Here we can use projective coordinates \eqref{cord1} to get 
\beq
\lap_g = -(X\frac{\p}{\p X})^2 +nX\frac{\p}{\p X} - (Xx')X\gamma\frac{\p}{\p X} + (Xx')^2\lap_H.
\eeq
Since $\beta_0$ is a diffeomorphism over the interior of $\xo$, it induces a symplectmorphism, which in local coordinate \eqref{cord1} is
\begin{gather*}
\beta_0^* : (x, y, x', y'; \xi, \eta, \xi', \eta')\in T^*(\intx\times\intx)\rightarrow (X, Y, x', y'; \la, \mu, \la', \mu') \in T^*(\xo),\\
\text{where }\la = x'\xi, \ \ \mu = x'\eta, \ \ \la' = \xi' + X\xi + \eta Y, \ \ \mu' = \eta + \eta'.
\end{gather*}
So the lift of the Hamiltonian $p$ is $p_0 = \beta_0^*p =\ha( X^2\la^2 + X^2 h(Xx', y' + x' Y, \mu) ),$ and
\beq
H_{p_0} = \beta_0^*H_p = \la X^2 \p_X - (X\la^2 + X h + \fot X^2 \p_X h)\p_\la + \fot X^2 H_h. 
\eeq
From Proposition \ref{lapara}, the phase function $\phi_l = -\log X + C^\infty$, so that $\la = \p_X \phi_l = -1/X + C^\infty$. Therefore,
$-2 H_{p_0} (m, d_m\phi_l) = 2 X \p_X + X C^\infty.$ 
Since $\lap_g \phi_l = -n + X C^\infty$, the first transport equation becomes
\beq
-2 H_{p_0} a_{l, 0} + (\lap_g \phi_l) a_{l, 0} = 2 X \p_X a_{l, 0} - n a_{l, 0} + X a_{l, 0} C^\infty = 0,
\eeq
which implies that $a_{l, 0} = X^\fnt C^\infty$. The rest is similar to Region 1. 

{\em Region 3: Near $L\cap \ff \cap R$. }  Without loss of generality, assume $y_1 - y_1' \geq 0$. We can use the following projective coordinates for the $0$-blow up
\begin{equation}\label{cord2}
u = y_1 - y_1', \ \ w = \frac{x}{y_1 - y_1'}, \ \ w' = \frac{x'}{y_1 - y_1'},\ \ y'  \text{ and } Z_j = \frac{y_j - y_j'}{y_1 - y_1'}, \ \ j = 2, 3,\cdots n.
\end{equation}
Here $w, w'$ and $u$ are boundary defining functions for $L, R$ and $\ff$ respectively. The Laplace operator becomes
\beq
\lap_g =  -(w\frac{\p}{\p w})^2 +nw\frac{\p}{\p w} - u^2 w\gamma\frac{\p}{\p w} + u^2 w^2 C^\infty. 
\eeq
The induced symplectic change of variables is 
\begin{gather*}
\beta_0^* : (x, y, x', y'; \xi, \eta, \xi', \eta') \in T^*(\intx\times\intx) \rightarrow (u, w, w', Z, y'; \tau, \la, \la', \mu, \mu')\in T^*(\xo),\\
\text{where } \la = \xi u,  \ \ \la' = \xi' u, \ \ \mu_j =  u \eta_j Z_j,\ \  j = 2, 3, \cdots, n\\
 \mu' = \eta + \eta', \ \ \tau = \xi w + \xi' w' + \eta_1 + \sum_{j = 2}^{n}\eta_jZ_j.  
\end{gather*}
So the lift of $p$ becomes 
$p_0 = \fot w^2(\la^2 + h(wu, y, \eta)),$
and the Hamilton vector field is of the form
\beq
H_{p_0}  = \la w^2 \frac{\p}{\p w} + w C^\infty.
\eeq
From Proposition \ref{lapara}, we know that the phase function $\phi_l = -\log w - \log w' + C^\infty$. Then $\la  = \p_{w}\phi_l = -1/w + C^\infty$, and we have $-2H_{p_0}(m, d_m\phi_l)= 2w \p_{w} + w C^\infty.$
Again, $\lap_g \phi_l = -n + w C^\infty$. So \eqref{trans1} becomes
\beq
-2 H_{p_0} a_{l, 0} + (\lap_g \phi_l) a_{l, 0} =  2w \p_{w}a_{l, 0} - n a_{l, 0} + wa_{l, 0}C^\infty = 0.
\eeq
This gives $a_{l, 0} = w_1^\fnt C^\infty$. The rest is similar to Region 1.

Up to now, we have shown that $\hat a_j = \rho_L^\fnt C^\infty(\La_0)$. For the asymptotics in $\rho_R$, we just need to notice that $\hat a_{j}$  automatically satisfies the right transport equation. This is because  polar coordinates  $(r, \omega, z')$ is used in the local construction at face $\mcs$ (see \cite{MSV}), and $E_1$ is expressed in polar coordinates hence symmetric in $z, z'$ when blown-down to $\intx\times\intx\times[0, 1)$.  So the asymptotics of $\hat a_{j}$  at $R$ is the same as at $L$, i.e. $\hat a_j = \rho_L^\fnt \rho_R^\fnt C^\infty(\La_0)$.

By Remark \ref{rmkext}, we let $\tilde \phi_l$ be the restriction of $\phi_l$ to $U_l\times \tilde \Theta_l$ for $\tilde \Theta_l$ precompact in $\Theta_l$, such that $\La_{\tilde \phi_l} \subset \La_{\phi_l}$ and $\La_0$ is covered by $\La_{\tilde \phi_l}$.  Let $\chi_k$ be a  locally finite partition of unity subordinated to the open covering $\La_{\tilde \phi_l}$, and let $a_{l, j} = (\sum_{k} \chi_k) \hat a_j$ where the summation is over $k$ such that $\supp \chi_k \subset \La_{\tilde \phi_l} \subset \La_{\phi_l}$. In particular, $a_{l, j}$ vanishes on $\La_0\backslash \La_{\tilde \phi_l}$. We can extend $a_{l, j}$ to a sufficiently small neighborhood of $\La_{\tilde \phi_l}$ in $T^*(\xo)$, or equivalently a small neighborhood of $\tilde C_l \doteq \{(m, \theta) \in U_l\times \tilde \Theta_l : d_\theta \tilde \phi_l = 0\}$. Because $\tilde \Theta_l$ is precompact in $\Theta_l$, $a_{l, j}$ can be extended to a smooth function in $U_l\times \Theta_l$ and compactly supported in $\theta$. To see this will not affect the asymptotics of $a_{l, j}$ at the left and right faces, we can smoothly extend $\rho_L^{-\fnt}\rho_R^{-\fnt}a_{l, j}$ instead. Therefore, we get well defined amplitude functions $a_{l, j}(m, d_m\phi_l)$ with $a_{l, j} = \rho_L^\fnt \rho_R^\fnt C^\infty(\overline{U_l} \times \Theta_l)$ and compactly supported in $\theta$. Here we take the closure of $U_l$ in $\xo$ to emphasize that the functions are smooth to the boundary of $\xo$.

Finally, by the Borel lemma, we get the asymptotic summation $a_l \sim \sum_j a_{l, j}h^{-\fnt - 1 - \frac{N_l}{2} + j}$ such that $a_l = \rho_L^\fnt\rho_R^\fnt h^{-\fnt - 1 - \frac{N_l}{2}}C^\infty(U_l\times \Theta_l)$ and is compactly supported in $\theta$. The operator $G_2$ is well-defined as in \eqref{eqg2}. Applying the stationary phase method see e.g. Sogge \cite{So}, the oscillatory integral in $G_2$ with amplitude $a_l$ is of $O(h^{-\fnt - 1 - \kappa})$ locally uniformly for some $\kappa\geq 0$. The remainder term is of $O(h^\infty)$ because the amplitudes solve \eqref{eqints} on the critical sets while away from the critical sets the oscillatory integral is $O(h^\infty)$ by the stationary phase method.  This completes the proof of the lemma.
\end{proof}

\subsubsection{At $\mcf$ face.}
This step is the same as in \cite{MSV}. We regard $E_2$ as a function on $\xo\times [0, 1)$. Since $E_2$ is smooth at $\ff$, we can write $E_2$ in Taylor series 
\beq
E_2 \sim \sum_{j=1}^\infty \rho_\ff^j E_{2, j}, \text{ where } E_{2, j} \in  \rho_R^{\frac{n}{2} + i\frac{\sigma}{h}} \rho_L^{\frac{n}{2} + i\frac{\sigma}{h}}h^\infty C^\infty(\ff\times [0, 1)).
\eeq 
This is well-defined because $|\im\sigma/h| < C$, and $ \rho_R^{\frac{n}{2} + i\frac{\sigma}{h}}\rightarrow 0$ as $h \rightarrow 0$ and $\rho_R \rightarrow 0$. Now we look for $G_3 \sim \sum_{j = 1}^\infty \rho_\ff^j G_{3, j}$ such that 
\begin{equation*}
P(h, \sigma)G_3 - E_2  = E_3 \in h^\infty \rho_\ff^\infty C^\infty(X^2_{0, \hbar}).
\end{equation*}
We found the normal operator at $\ff$ in \eqref{eqnf}, which is the semi-classical Laplacian of a hyperbolic metric
\begin{equation*}
\mathcal{N}_\ff(Q_0) = h^2(-(X\p_X)^2 + X^2\lap_{H(0, y')}(D_Y)) - \sigma^2.
\end{equation*}
Appyling Proposition 6.15 of \cite{MM}, we can find $G_{3, j} \in h^\infty \rho_R^{\frac{n}{2} + i\frac{\sigma}{h}} \rho_L^{\frac{n}{2} + i\frac{\sigma}{h}}C^\infty(\ff\times [0, 1))$. To see this will not destroy the asymptotics at $h\rightarrow 0$, we can repeat the construction near $\mcs$ on $\mcf$ instead of on $X^2_{0, \hbar}$ as done in \cite{MSV}. By Borel summation, we get 
$
G_3 \in h^\infty \rho_R^{\frac{n}{2} + i\frac{\sigma}{h}} \rho_L^{\frac{n}{2} + i\frac{\sigma}{h}}C^\infty(\xo\times [0, 1)).
$
This can be  arranged as $G_3\in h^\infty I^{\fnt + i\soh, \infty, \fnt + i\soh}_\hbar(X, \Lambda)$. The remainder term is 
$
E_3 \in h^\infty\rho_\ff^\infty \rho_L^{\fnt + 1 + i\frac{\sigma}{h}} \rho_R^{\frac{n}{2} + i\frac{\sigma}{h}}C^\infty(\xo\times[0, 1)).
$

\subsubsection{At $\mcl$ face.}
Since $E_3$ vanishes to infinie order in $\rho_\ff$, we can regard $E_3$ as a smooth function on $X\times X\times[0, 1)$. Near $x = 0$, the Laplacian is
\begin{equation*}
P(h, \sigma) = h^2( -(x\frac{\p}{\p x})^2 +nx\frac{\p}{\p x} - x^2\gamma\frac{\p}{\p x} + x^2\lap_H - \frac{n^2}{4}) - \sigma^2.
\end{equation*}
The indicial operator is
\beq
I(P) = h^2( -(x\frac{\p}{\p x})^2 +nx\frac{\p}{\p x} - \frac{n^2}{4}) - \sigma^2.
\eeq
The indicial roots are $\alpha = \frac{n}{2} \pm i\frac{\sigma}{h}$. For $E_3 \in h^\infty \rho_\ff^\infty \rho_L^{\frac{n}{2} + 1 + i\frac{\sigma}{h}} \rho_R^{\frac{n}{2} + i\frac{\sigma}{h}} C^\infty(X\times X \times [0, 1))$, we can use a Taylor series argument to find $G_4 \in h^\infty \rho_\ff^\infty \rho_L^{\frac{n}{2} + i\frac{\sigma}{h}} \rho_R^{\frac{n}{2} + i\frac{\sigma}{h}}C^\infty(X\times X\times [0, 1))$ such that 
\begin{equation*}
P(h, \sigma)G_4 - E_3 = E_4 \in h^\infty\rho_\ff^\infty \rho_L^\infty \rho_R^{\frac{n}{2} + i\frac{\sigma}{h}}C^\infty(X\times X\times [0, 1)),
\end{equation*}
which is the final error. We can arrange $G_4\in h^\infty\rho_\ff^\infty I_\hbar^{\infty, \infty, \frac{n}{2} + i\frac{\sigma}{h}}(X, \Lambda)$.  

To summarize, we find $G = G_0 + \tilde G$, with $G_0 \in \Psi_{0, \hbar}^{-2}(X)$, and $\tilde G = G_1+G_2 + G_3 + G_4 \in I_\hbar^{\frac{n}{2} + i\soh, -\frac{n}{2} - 1 - \kappa, \frac{n}{2} + i\soh}(X, \Lambda)$, such that 
$
P(h, \sigma)G - \Id = E_4,
$
where $E_4 \in \rho_\mcs^\infty \rho_\mca^\infty \rho_\mcf^\infty \rho_\mcl^\infty \rho_\mcr^{\frac{n}{2} + i\frac{\sigma}{h}}C^\infty(X^2_{0, \hbar})$. This completes the proof of Theorem \ref{main0}.

\subsection{High energy resolvent estimates}
According to Theorem \ref{main0},  the parametrix $G = G_0 + \tilde G$, where $G_0 \in \Psi_{0, \hbar}^{-2}(X)$, and $\tilde G\in I_\hbar^{\fracnt + i\soh, -\fracnt -1 - \kappa, \fracnt + i\soh}(X, \Lambda)$. The Schwartz kernel of $G_0$ is a conormal distribution to $\diag_\hbar$ of order $-2$, vanishing to infinite order at $\mcl, \mca, \mcr$. We can double the manifold $\Xh$ across $\mcf, \mcs$ faces where $\diag_\hbar$ intersects transversally to get a compact manifold without boundary. By using the standard $L^2$ theory of pseudodifferential operators, we have 
\beq
\|\rho^a G_0 \rho'^b f\|_{\ltx} \leq C \|f\|_{\ltx},
\eeq
where $a, b\in \mr, a+b \geq 0$, $\rho$ is a boundary defining function of $\p X$ on $X$, and $\rho'$ is the same function on the other copy of $X$. 
The constant  $C$ does not depend on $h$. 

For $\tilde G$, we first notice that on any compact set $K \subset \intx\times \intx \backslash \diag^\circ$, the function $\chi_K \tilde G$ is smooth and bounded, where $\chi_K$ is a smooth cut-off function supported in $K$. By Schur's lemma, for $a, b\in \mr, \sigma \in \Omega_\hbar$, there exists a constant $C$ such that
\beq
\|\rho^a \chi_K \tilde G(h, \sigma) \rho'^b f\|_{\ltx} \leq C h^{-\fnt - 1 - \kappa} \|f\|_{\ltx}.  
\eeq

Next near $\diag_\hbar$, the kernel is 
\begin{equation*}
\rho^a \tilde G(h, \sigma) \rho'^b = \rho_\mca^{-\fracnt -1-\kappa} \rho_\mcs^{-n-1} F,
\end{equation*}
where $F$ is bounded. Using the projective coordinates for the semi-classical blow up: $\rho_\mcs = r, \rho_\mca = h/r$, the kernel is controlled by
\begin{equation*}
|\rho^a \tilde G(h, \sigma) \rho'^b| \leq C (\frac{h}{r})^{-\fracnt -1 - \kappa} r^{-n-1}. 
\end{equation*}
The volume form $dg(z') = C(n) r^ndrdw$. Since the injectivity radius of $(\intx, g)$ is positive (see Lemma \ref{inject} in Appendix), there exists an $\epsilon_1 > 0$ sufficiently small so that
\begin{equation*}
\int_{r < \epsilon_1} |\rho^a \tilde G(z, z', h, \sigma) \rho'^b| dg(z')  \leq  C h^{-\fracnt -1 - \kappa} \int_0^{\epsilon_1} r^{\fracnt + \kappa} dr \leq C h^{-\fracnt -1 - \kappa},
\end{equation*} 
where the constant $C$ is uniform in $z$. The $L^2$ estimate follows from Schur's lemma.

Finally, to deal with the estimates near $\mcl, \mcr$, we recall Lemma 6.2 from \cite{MSV}, see also \cite{Ma3}.
\begin{lemma}\label{est1}
Suppose for $C > 0$ the Schwartz kernel of $B$ trivialized by $dg(z')$ satisfies $|\beta_0^* B(z, z', h)| \leq C \rho_L^\alpha \rho_R^\beta. $ For $\alpha, \beta > \fracnt$, there exists some constant $C'$ such that $\|B f\|_{\ltx} \leq CC' \|f\|_{\ltx}.$
\end{lemma}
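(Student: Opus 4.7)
The plan is Schur's test for $L^2(X; dg)$ boundedness with the weight $w(z) = \rho(z)^{\fnt}$: one shows
\beq
\sup_z\int_X |B(z,z')|\, w(z')\, dg(z') \leq CC'\, w(z),
\eeq
together with its symmetric counterpart, from which the lemma follows directly. The key geometric input is that on $\xo$ one can choose defining functions for $\mcl, \mcr, \ff$ so that $\rho_L\rho_\ff \sim x$ and $\rho_R\rho_\ff \sim x'$, with $\rho_\ff$ comparable to $(x^2 + (x')^2 + |y-y'|^2)^{1/2}$ near $\p\diag$. Under these identifications,
\beq
\rho_L^\alpha \rho_R^\beta \;\sim\; \frac{x^\alpha (x')^\beta}{\rho_\ff^{\alpha + \beta}},
\eeq
which converts the pointwise hypothesis on $\beta_0^* B$ into a concrete $L^1$ estimate for the Schur integral.

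After the usual reduction to a product neighborhood of $\p X$ (on a compact set of the interior the integral is trivially bounded), one is led to control
\beq
\int \frac{x^{\alpha}(x')^{\beta}}{(x^2 + (x')^2 + |y - y'|^2)^{(\alpha+\beta)/2}}\,(x')^{\fnt}\,\frac{\sqrt{\det H}\, dx'\, dy'}{(x')^{n+1}}.
\eeq
Rescaling $x' = xX$, $y' - y = xU$ turns $dx'\, dy'$ into $x^{n+1}\, dX\, dU$ and $\rho_\ff^{\alpha+\beta}$ into $x^{\alpha+\beta}(1 + X^2 + |U|^2)^{(\alpha+\beta)/2}$, producing a clean prefactor of $x^{\fnt}$ multiplied by the dimensionless model integral
\beq
I(\alpha, \beta) = \int_{X > 0,\, U\in\mrn} \frac{X^{\beta - \fnt - 1}}{(1 + X^2 + |U|^2)^{(\alpha+\beta)/2}}\, dX\, dU.
\eeq
Carrying out the $U$ integration first reduces $I(\alpha, \beta)$ to a finite constant (requiring $\alpha + \beta > n$, automatic from the hypotheses) times $\int_0^\infty X^{\beta - \fnt - 1}(1 + X^2)^{(n - \alpha - \beta)/2}\, dX$. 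Integrability at $X = 0$ requires $\beta > \fnt$, while at $X \to \infty$ the integrand decays like $X^{\fnt - \alpha - 1}$, integrable precisely when $\alpha > \fnt$. Thus the two hypotheses $\alpha, \beta > \fnt$ are used exactly to guarantee finiteness of $I(\alpha, \beta)$, and the Schur integral is then bounded by $C' x^{\fnt} = C' w(z)$, as needed.

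The symmetric inequality involving $\int |B(z, z')|\, w(z)\, dg(z)$ is obtained by the identical computation with $\alpha$ and $\beta$ exchanged, where now $\alpha > \fnt$ plays the role previously played by $\beta$. The only real bookkeeping is to verify the global comparability $\rho_L^\alpha \rho_R^\beta \lesssim x^\alpha (x')^\beta / \rho_\ff^{\alpha+\beta}$ on $\xo$ and to patch the product neighborhoods of $\p X$ with the compact interior contribution; both are routine given the explicit projective charts from Section 3, so the only substantive analytic content is the convergence of the model integral $I(\alpha, \beta)$.
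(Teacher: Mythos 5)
The paper does not prove this lemma itself; it simply invokes Lemma 6.2 of \cite{MSV} (see also \cite{Ma3}), whose proof is precisely the weighted Schur-test computation you reconstruct, so your route coincides with the cited one. Your conversion $\rho_L^\alpha\rho_R^\beta \sim x^\alpha(x')^\beta/\rho_\ff^{\alpha+\beta}$, the rescaling to the dimensionless model integral, the use of $\alpha+\beta>n$ (automatic) for the $U$-integration, and the separate roles of $\beta>\fnt$ at $X=0$ and $\alpha>\fnt$ at $X=\infty$ are all correct, as is the remark that the symmetric Schur bound follows by exchanging $\alpha$ and $\beta$. Two cosmetic slips to fix: the $\sup_z$ in your opening display is spurious (the Schur condition is pointwise in $z$, with $w(z)$ on the right-hand side, and its symmetric counterpart), and the boundary faces should be written $L$, $R$, $\ff$ of $\xo$ rather than $\mcl,\mcr$ of $X^2_{0,\hbar}$, since the hypothesis concerns $\beta_0^*B$ rather than $\beta_\hbar^*B$. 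Also, the parenthetical that the integral is ``trivially bounded'' on a compact interior set slightly undersells the bookkeeping (when $z$ is interior the $z'$-integral still reaches $\p X$ and uses $\beta>\fnt$ there), but this is subsumed in your model integral at $X\to 0$, so the argument as a whole is sound.
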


The boundary defining functions are lifted to 
$
\beta_\hbar^*\rho = \rho_\mcr \rho_{\mcf}, \ \ \beta_\hbar^*\rho' = \rho_\mcl \rho_{\mcf}.
$
From Theorem \ref{main0}, we have near $\mcl$ and $\mcr$ that 
\begin{equation*}
\rho^a \tilde G(h, \sigma) \rho'^b = h^{-\fracnt - 1 - \kappa} \rho_\mcf^{a+b} \rho_\mcl^{a + \fracnt + i\soh} \rho_\mcr^{b + \fracnt + i\soh} F,
\end{equation*}
where $F$ is smooth and bounded.  For $a + b \geq 0, a, b > \frac{\im\sigma}{h}$, we can apply Lemma \ref{est1} to get the $L^2$ estimates. Finally, by a finite partition of unity of $\Xh$, the local estimates give 
\begin{equation*}
\|\rho^a G(h, \sigma) \rho^b f\|_{\ltx} \leq C h^{-\fracnt - 1 - \kappa}\|f\|_{\ltx}. 
\end{equation*} 

The remainder term $E$ can be estimated in the same way. For any $M, N \geq 0$, we have
\begin{equation*}
\begin{split}
&|\rho^a E(h, \sigma) \rho'^b| \leq Ch^N \rho_\mcl^{a + M} \rho_\mcr^{\fracnt -\frac{\im\sigma}{h} + b}\rho_\mcf^{a + b}, \text{ near } \mcr, \mcl, \\
&|\rho^a E(h, \sigma) \rho'^b| \leq C (\frac{h}{r})^N r^M, \text{ near } \diag_\hbar. 
\end{split}
\end{equation*}
For $a + b \geq 0, b > \frac{\im\sigma}{h}, M > \fracnt - a$ and $N\in \mn$, there exists $C_N > 0$ such that 
\begin{equation}\label{eqeest}
\|\rho^a E(h, \sigma) \rho^b f\|_{\ltx} \leq C_N h^N \|f\|_{\ltx}.
\end{equation}

For the resolvent estimate, we first recall the relation 
\beqq\label{eqsemi}
P - \la^2 = h^2P(h, \sigma), \ \ h = 1/|\re\la|, \sigma =  \la / |\re\la|.
\eeqq
For $\la/|\re\la| \in\Omega_\hbar$ and $ \im \la < 0$, the resolvent is holomorphic. We can apply $R(\la)$ to \eqref{eqsemi} and use Theorem \ref{main0} to get $h^{-2}G(h, \sigma) = R(\la)(\Id + E(h, \sigma)),$ and with weight
\begin{equation}\label{eq2}
h^{-2}\rho^a G(h, \sigma)\rho^b = \rho^a R(\la)\rho^b(\Id + \rho^{-b} E(h, \sigma)\rho^b).
\end{equation}
For $h$ sufficiently small, $\Id + \rho^{-b} E(h, \sigma)\rho^b$ is invertible by  \eqref{eqeest}. Therefore, we can extend $R(\la)$  holomorphically to 
\beq
h\in (0, 1/C_2), \ \ |\im\sigma| < C_1 h \Longleftrightarrow |\re\la| > C_2,\ \  |\im\la| < C_1,  
\eeq
 through \eqref{eq2}. The high energy estimates follow  from the estimates of $G$ and $E$. This finishes the proof of Theorem \ref{main1}.  

\begin{remark}
If $a, b = \im\la$, one can obtain resolvent estimates on logarithmic weighted $L^2$ spaces by using the full version of Lemma \ref{est1}.
\end{remark}

\section{Exponential Decay of the Radiation Field}

\subsection{Analysis of the Eisenstein funciton.}
The radiation field $\mcr_{\pm}$ is related to the transposed Eisenstein function, which we study first in this section. By fixing a product decomposition (\ref{prod}), the choice of the boundary defining function $x$ fixes a metric $H_0$ on $\p X$ out of the conformal class determined by $g$. As pointed out in S\'a Barreto \cite{Sa}, the transposed Eisenstein function
\begin{equation}\label{eisen}
\mce(\la) = x^{-\fracnt - i\la}R(\la)\mid_{x = 0},
\end{equation}
is a well-defined operator whose kernel is a distribution on $\p X\times X$, conormal to $\p\diag = \{(z, z')\in \p X\times \p X : z = z'\}$. 
The proof essentially follows from Proposition 4.1 of \cite{JS1}. The singularities of the kernel $\mce(\la, y, z')$ can be resolved on a blown-up space $\p X\times_0 X$   defined by blowing up $\p X\times X$ along $\p\diag$. Let $\tilde \beta_0: \p X\times_0 X \rightarrow \p X\times X$ be the blow down map. It is clear that $\tilde\beta_0 = \beta_0\mid_{\p X\times_0 X}$. The new manifold has two boundary faces. The front face $\tilde\ff$ introduced by the blow up is the closure of $\tilde \beta_0^{-1}(\p\diag)$. This is a half-sphere bundle over $\p\diag$. The right face $\tilde R$ is the closure of $\tilde \beta_0^{-1}(\p X\times \intx)$. See Figure \ref{fig6}. 

\begin{figure}[bhtp]
\centering
\scalebox{0.9}
{\begin{pspicture}(0,-2.74)(12.66,2.72)
\psline[linewidth=0.04cm,arrowsize=0.05291667cm 2.0,arrowlength=1.4,arrowinset=0.4]{->}(2.26,-0.3)(5.26,-0.3)
\psline[linewidth=0.04cm,arrowsize=0.05291667cm 2.0,arrowlength=1.4,arrowinset=0.4]{->}(1.4,-0.68)(0.0,-2.68)
\psline[linewidth=0.04cm,arrowsize=0.05291667cm 2.0,arrowlength=1.4,arrowinset=0.4]{->}(1.66,-0.1)(1.66,2.7)
\psline[linewidth=0.04cm](2.26,-0.32)(3.68,-1.72)
\psline[linewidth=0.04cm](1.4,-0.7)(2.82,-2.26)
\psline[linewidth=0.04cm,linestyle=dotted,dotsep=0.16cm](1.66,0.14)(1.66,-0.38)
\psline[linewidth=0.04cm,linestyle=dotted,dotsep=0.16cm](1.66,-0.3)(2.26,-0.3)
\psline[linewidth=0.04cm,linestyle=dotted,dotsep=0.16cm](1.66,-0.3)(1.4,-0.66)
\psarc[linewidth=0.04](1.85,-0.53){0.47}{27.645975}{207.47443}
\psarc[linewidth=0.04](3.24,-2.0){0.5}{32.47119}{206.56505}
\usefont{T1}{ptm}{m}{n}
\rput(4.84,-1.3){$\tilde R$}
\usefont{T1}{ptm}{m}{n}
\rput(2.45,-1.395){$\tilde{\text{ff}}$}
\psline[linewidth=0.04cm,arrowsize=0.05291667cm 2.0,arrowlength=1.4,arrowinset=0.4]{->}(9.28,-0.3)(12.46,-0.3)
\psline[linewidth=0.04cm,arrowsize=0.05291667cm 2.0,arrowlength=1.4,arrowinset=0.4]{->}(9.28,-0.3)(8.02,-2.6)
\psline[linewidth=0.04cm,arrowsize=0.05291667cm 2.0,arrowlength=1.4,arrowinset=0.4]{->}(9.28,-0.3)(9.28,2.68)
\psline[linewidth=0.04cm,arrowsize=0.05291667cm 2.0,arrowlength=1.4,arrowinset=0.4]{->}(5.62,2.1)(7.36,2.1)
\usefont{T1}{ptm}{m}{n}
\rput(6.4,2.525){$\tilde\beta_0$}
\usefont{T1}{ptm}{m}{n}
\rput(10.89,1.045){$X$}
\usefont{T1}{ptm}{m}{n}
\rput(12.32,-0.775){$y'$}
\usefont{T1}{ptm}{m}{n}
\rput(8.57,-2.515){$y$}
\usefont{T1}{ptm}{m}{n}
\rput(9.78,2.265){$x'$}
\usefont{T1}{ptm}{m}{n}
\rput(8.12,-1.135){$\p X$}
\end{pspicture} }
\caption{$\p X\times_0 X$ constructed from $\p X\times X$}
\label{fig6}
\end{figure}
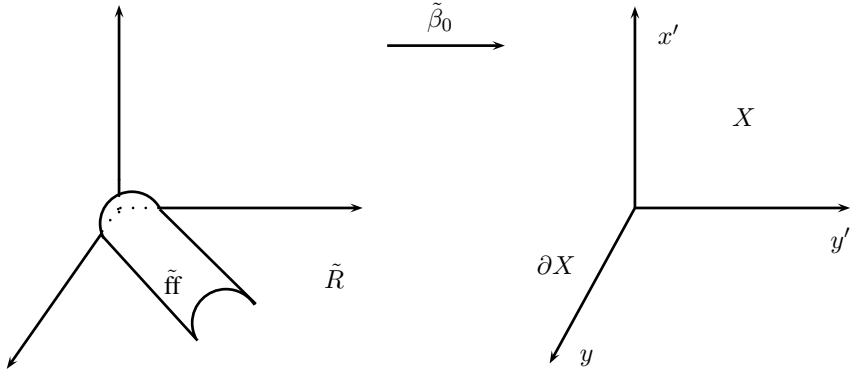

The definition \eqref{eisen} of transposed Eisenstein function clearly depends on the choice of the boundary defining function $x$, and can be made independent by working with density bundles as  in  \cite{JS1}. However, we shall take $\rho = x$ in this section for simplicity.  

We  study $\mce(\la)$ in the limit $|\re\la|\rightarrow \infty$ or equivalently as $h\rightarrow 0$ with $h = 1/|\re\la|$. From  \eqref{eq2} and \eqref{eisen}, we can write
\begin{equation}\label{eiseneq}
\mce(\la)\rho'^b = \rho^{-\fracnt - i\la}R(\la) \rho'^b \mid_{\rho = 0} = h^2\rho^{-\fracnt - i\soh} G(h, \sigma)\rho'^b \mid_{\rho = 0} \circ (\Id + \rho'^{-b} E(h, \sigma)\rho'^b)^{-1}.
\end{equation}
For $h$ sufficiently small, the operator $(\Id + \rho^{-b} E(h, \sigma)\rho^b)^{-1}$ is bounded on $\ltx$ uniformly in $h$. For the $L^2$ estimates of $\mce(\la)$, it suffices to study
\begin{equation}\label{neweis}
\mcep(h, \sigma) \doteq \rho^{-\fracnt - i\soh} G(h, \sigma) \mid_{\rho = 0},  
\end{equation}
for $h \in (0, 1)$  and $\sigma\in \Omega_\hbar$. Then,  
\beqq\label{mce}
\mce(\la) \rho'^b = h^2 \mcep(h, \sigma)\rho'^b \circ (\Id + \rho'^{-b} E(h, \sigma)\rho'^b)^{-1}.
\eeqq

The Schwartz kernel of $\mcep(h, \sigma)$ can be resolved on $\p X\times_0 X\times [0, 1)$. Actually, from Theorem \ref{main0}, we know that $G = G_0 + \tilde G$ and they satisfy
\begin{equation}\label{eq62}
\begin{split}
&\beta_\hbar^*(\rho^{-\fracnt - i\la} G_0)\mid_{\mcr} = 0,\\
&\beta_\hbar^*(\rho^{-\fracnt - i\la} \tilde G)\mid_{\mcr} = \rho_\mcf^{-\fracnt - i\frac{\sigma}{h}} \rho_\mcl^{\fracnt + i\frac{\sigma}{h}} \rho_\mca^{-\fracnt - 1 - \kappa}\rho_\mcs^{-n - 1}F\mid_{\mcr}.
\end{split}
\end{equation}
It is clear that $F|_\mcr$ is smooth on $\mcr$. Now we consider the restriction of  boundary defining functions $\rho_\mcf, \rho_\mca, \rho_\mcl, \rho_\mcs$ to $\mcr$. This can be done in convenient local coordinates near the blow-up. 

We use $(y, x', y')$ as the local coordinates of $\p X\times X$ near the boundary. The center of the blow up $\p\diag$ is $\{x' = 0, y = y'\}$. First over the interior of $\tilde\ff$, we use projective coordinate
\begin{equation}\label{D1}
x',\ \ Y = \frac{y - y'}{x'},\ \ y.
\end{equation}
Then $x'$ is a boundary defining function of $\tilde\ff$, and \eqref{D1} is just the restriction of (\ref{cord1}) to $X = 0$.

Next, near the corner  $\tilde\ff \cap \tilde R$, we take $y_1 - y_1' > 0$ and use projective coordinate
\begin{equation}\label{D2}
t = y_1 - y_1',\phantom{a} s = \frac{x'}{y_1 - y_1'},\phantom{a} Z_j = \frac{y_j - y_j'}{y_1 - y_1'},\phantom{a} y.
\end{equation}
Then $t, s$ are boundary defining functions of $\tilde \ff, \tilde L$ respectively, and \eqref{D2} is the restriction of \eqref{cord2} to $w = 0$. Since $\mcs\cap \mcr = \emptyset$, the boundary defining function $\rho_\mcs$ is smooth and positive on $\mcr$. Also, $\rho_\mca\mid_\mcr$ is a smooth  boundary defining function of the face $\{h = 0\}$ in $\p X\times_0 X\times [0, 1)$.

Now we see that the Schwartz kernel of $\mcep(h, \sigma)$ is well-defined on $\p X\times_0 X\times[0, 1)$, and
\begin{equation}\label{eq64}
\mcep(h, \sigma) = \beta_\hbar^*(\rho^{-\fracnt - i\soh}G)\mid_\mcr =  \rho_{\tilde\ff}^{-\fracnt - i\soh} \rho_{\tilde L}^{\fracnt + i\soh} h^{-\fracnt - 1 - \kappa} \tilde F(h, \sigma), 
\end{equation}
where $\tilde F$ is smooth on $\p X\times_0 X\times [0, 1)$. 
The $L^2$ estimate of $\mce_0(h, \sigma)$  can be  derived from Schur's lemma. As before, we use the volume form on $\intx$ to trivialize the density bundle. Hence for $T: C^\infty_0(X) \rightarrow C^{-\infty}(\p X)$ with Schwartz kernel  $K_T(y, z')dg(z')$, the action of $T$ is
\begin{equation*}
Tf(y) = \int K_T(y, z')f(z')dg(z'), \ \ \forall f\in C_0^\infty(X).
\end{equation*}

We start from the high energy estimate. 
\begin{prop}\label{higheis}
For any $C_2 > 0$, there exists some $C_1 > 0$ such that for $\{\la \in \mc : |\re\la| > C_1, |\im\la|< C_2\}$ and $b > \frac{n}{2} + |\im\la|$, the operator $\mce(\la)\rho^b$ is a holomorphic family of bounded operators from $\ltx$ to $L^2(\p X)$. Moreover, there exist $C>0, \kappa \geq 0$ such that
\begin{equation*}
\|\mce(\la)\rho^b f \|_{L^2(\p X)} \leq C |\la|^{\frac{n}{2} - 1 - \kappa} \|f\|_{\ltx }. 
\end{equation*}
\end{prop}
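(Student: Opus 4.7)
The plan is to reduce the estimate on $\mce(\la)\rho^b$ to a kernel estimate for $\mcep(h,\sigma)\rho^b$ on the blown-up space $\p X\times_0 X\times[0,1)$, and then apply Schur's lemma. Starting from the identity $\mce(\la)\rho^b = h^2\mcep(h,\sigma)\rho^b\circ(\Id + \rho^{-b}E(h,\sigma)\rho^b)^{-1}$ displayed in \eqref{mce}, the remainder factor is bounded on $\ltx$ uniformly in $h$ and $\sigma\in\Omega_\hbar$ once $h$ is small enough. This follows from \eqref{eqeest}: choosing $N$ large and $M>n/2-a$, the operator $\rho^{-b}E(h,\sigma)\rho^b$ has operator norm $O(h^N)$, so a Neumann series inverts $\Id+\rho^{-b}E\rho^b$ and fixes the constants $C_1,C_2$ in the hypothesis. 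Holomorphy in $\la$ is then immediate from the holomorphy of $G(h,\sigma)$ and $E(h,\sigma)$ asserted in Theorem \ref{main0}.

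Next I would estimate $\|\mcep(h,\sigma)\rho^b f\|_{L^2(\p X)}$ in terms of $\|f\|_{\ltx}$. The Schwartz kernel \eqref{eq64} gives the pointwise bound
\beq
\bigl|\mcep(h,\sigma)(y,z')\,\rho'^b\bigr|
\;\le\; C\,h^{-\fnt-1-\kappa}\,\rho_{\tilde\ff}^{\,b-\fnt+\im\sigma/h}\,\rho_{\tilde L}^{\,b+\fnt-\im\sigma/h},
\eeq
since $\beta_\hbar^*\rho'|_{\mcr} = \rho_{\tilde L}\rho_{\tilde\ff}$ and $|\rho^{-\fnt-i\sigma/h}|=\rho^{-\fnt+\im\sigma/h}$. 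The hypothesis $b>\fnt+|\im\la|$ combined with $\im\sigma/h=\im\la$ ensures both exponents are strictly positive.

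The next step is to verify the two Schur integrals in suitable projective coordinates on $\p X\times_0 X$. Near the interior of $\tilde\ff$, use \eqref{D1}: the lift of the volume density $dg(z')=(x')^{-n-1}\sqrt{H}\,dx'dy'$ becomes $(x')^{-1}\sqrt{H}\,dx' dY$ (since $y'=y-x'Y$ contributes a Jacobian $(x')^n$). With $\rho_{\tilde\ff}=x'$, the $z'$-integral in the Schur test is controlled by $\int_0^{\epsilon_0}(x')^{b-\fnt+\im\sigma/h-1}dx'$, which is finite exactly because $b-\fnt+\im\sigma/h>0$. Near the corner $\tilde\ff\cap\tilde L$, one uses \eqref{D2} to get a compatible pair of integrability conditions, now involving both $\rho_{\tilde L}$ and $\rho_{\tilde\ff}$, both controlled by the same hypothesis. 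For the other Schur integral ($\int dy$), the $y$-variable ranges over the compact $\p X$, so boundedness follows from the same pointwise bound. Schur's lemma therefore yields $\|\mcep(h,\sigma)\rho^b f\|_{L^2(\p X)}\le Ch^{-\fnt-1-\kappa}\|f\|_{\ltx}$, and combining with the $h^2$ prefactor and $h=1/|\re\la|\sim 1/|\la|$ in the strip $|\im\la|<C_2$ produces the asserted high-energy estimate $|\la|^{\fnt-1+\kappa}$.

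The main technical point is bookkeeping the lift of the volume form $dg(z')$ through the $0$-blow-down map $\tilde\beta_0$, together with the two different regions (interior of $\tilde\ff$ versus the corner $\tilde\ff\cap\tilde L$), because the boundary defining functions play asymmetric roles on the two sides. Once these local coordinate computations are carried out as above, the Schur bounds combine via a finite partition of unity on $\p X\times_0 X$ (using that $\rho_\mca|_{\mcr}$ is a smooth defining function for $\{h=0\}$ and that $\tilde F$ in \eqref{eq64} is smooth up to the boundary). No additional input beyond Theorem \ref{main0} and Schur's lemma is required.
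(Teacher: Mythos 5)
Your proposal follows essentially the same path as the paper's own proof: reduce to the kernel estimate for $\mcep(h,\sigma)$ via \eqref{mce}, exhibit the lifted kernel asymptotics on $\p X\times_0 X\times[0,1)$ using \eqref{eq64} and $\beta_\hbar^*\rho'|_{\mcr}=\rho_{\tilde L}\rho_{\tilde\ff}$, compute the lifted volume form in the two projective charts \eqref{D1} and \eqref{D2}, and apply Schur's lemma. The bookkeeping of the Jacobian factors (the $(x')^{-1}dx'dY$ density in region \eqref{D1} and the $s^{-n-1}t^{-1}\,ds\,dt\,dz$ density near the corner) is exactly what drives the hypothesis $b>\fnt+|\im\la|$, and you handle it correctly; the remark that the \emph{pointwise} exponents are positive is not by itself the Schur criterion, but you immediately follow with the actual integrability computation against the lifted density, which is the real point.

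One small discrepancy worth flagging: you conclude with the exponent $|\la|^{\fnt-1+\kappa}$, which is what the computation $h^2\cdot h^{-\fnt-1-\kappa}=h^{-(\fnt-1+\kappa)}$ genuinely produces and which is consistent with Theorem~\ref{main1}. The proposition as printed asserts $|\la|^{\fnt-1-\kappa}$, with the opposite sign on $\kappa$; that is almost certainly a typographical slip in the paper, and your exponent is the one supported by the argument. Otherwise your argument is complete and matches the paper's proof.
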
 
\begin{proof}
The holomorphy follows from that of $R(\la)$. For $h = 1/|\re\la|$ sufficiently small and $\sigma = \la/|\re\la| \in \Omega_\hbar$, $(\Id + \rho^{-b}E(h, \sigma)\rho^b)^{-1}$ is bounded on $\ltx$ uniformly in $h$. Hence it suffices to estimate $\mcep(h, \sigma)$. To apply Schur's lemma, we need
\beq
\text{I} = \sup_{y\in \p X} \int |\mcep(y, z', h) \rho^b| dg(z') \leq C_1, \text{ and } \text{II} = \sup_{z'\in X} \int |\mcep(y, z', h) \rho^b | dH_0(y) \leq C_2.
\eeq
We can estimate the two integrals locally by a finite partition of unity of $\p X\times_0 X$. The case away from $x' = 0$ is straightforward because the kernel is of the form $h^{-\fnt - 1 - \kappa} F$ for some $F\in C_0^\infty(\p X\times X)$. Hence it remains to deal with the case near the blow up, which can be covered by the two types of local coordinates \eqref{D1}, \eqref{D2}. 

First in projective coordinate \eqref{D1}, the pull-back of the volume forms are
\begin{equation*}
\begin{split}
\tilde\beta_0^*(\frac{dx'dy'}{(x')^{n+1}}) = \frac{dx'dY}{x'},\ \ \tilde\beta_0^*(dy) = dy.
\end{split}
\end{equation*}
Here we absorbed the factors $\sqrt{\det g}$ and $\sqrt{\det H_0}$  to the kernel. The Schwartz kernel of $\mcep(h, \sigma)$ is  
\begin{equation*}
\tilde \beta_0^*\mcep(h, \sigma) = \rho_{\tilde\ff}^{-\fracnt - i\soh} \rho_{\tilde L}^{\fracnt + i\soh} h^{-\fracnt - 1 -\kappa}\tilde F(h, \sigma),
\end{equation*}
where $\tilde F$ is smooth on $\p X\times_0 X\times [0, 1)$. We find that for $b > \fnt - \frac{\im\sigma}{h}$
\begin{equation*}
\begin{split}
\text{I} 
& =  h^{-\fracnt - 1-\kappa}\sup_y  \int    (x')^{b -\frac{n}{2} - i\frac{\sigma}{h} - 1} \tilde F(h, \sigma) dx'dY \leq C h^{-\fnt - 1 - \kappa}.
\end{split}
\end{equation*}
Here the integration is for small $x'$, and the constant $C$ depends on $\Omega_\hbar$. For the second integral,  
\begin{equation*}
\text{II} = h^{-\fracnt - 1-\kappa} \sup_{x', Y} \int (x')^{b -\fracnt -i \frac{\sigma}{h}} \tilde F(h, \sigma) dy \leq C h^{-\fnt - 1 - \kappa},
\end{equation*}
if $b \geq \fracnt  - \frac{\im\sigma}{h}$. Next we estimate the integrals in coordinate \eqref{D2}. The pull-back of densities become
\begin{equation*}
\begin{split}
\tilde\beta_0^*(\frac{dx'dy'}{(x')^{n+1}}) = \frac{ds dt dz}{s^{n+1}t},\ \ \tilde\beta_0^*(dy) = dy.
\end{split}
\end{equation*}
Using the kernel asymptotics, we have for $b > \fnt + |\frac{\im\sigma}{h}|$ that
\begin{equation*}
\begin{split}
\text{I} &= h^{-\fracnt - 1-\kappa}\sup_{y}\int t^{-\fracnt - i\frac{\sigma}{h}} s^{\fracnt + i\frac{\sigma}{h}} (st)^{b} \tilde F(h, \sigma) \frac{dsdtdz}{s^{n+1}t}\\
& = h^{-\fracnt - 1-\kappa} \sup_{y} \int t^{b -\fracnt - i\frac{\sigma}{h} - 1} s^{b - \fracnt + i\frac{\sigma}{h} - 1}  \tilde F(h, \sigma) ds dt dz\\
& \leq C h^{-\fnt  - 1 - \kappa}.
\end{split}
\end{equation*}
Here the integration is for $t$ and $s$ close to $0$. For the other integral, we have
\begin{equation*}
\text{II} = h^{-\fracnt - 1-\kappa} \sup_{t, s, z} \int t^{-\fracnt -i \frac{\sigma}{h}}s^{\fracnt + i\frac{\sigma}{h}} (st)^b \tilde F(h, \sigma) dy \leq C h^{-\fnt  - 1 - \kappa}.
\end{equation*}
if $b \geq \fracnt +|\frac{\im\sigma}{h}|$. Finally, by Schur's lemma, 
\beq
\|\mcep(h, \sigma) \rho^b f\|_{L^2(\p X)} \leq C h^{-\fracnt - 1-\kappa} \|f\|_{\ltx}.
\eeq
This finishes the proof in view of \eqref{mce} and translating the estimates in terms of $\la$. 
\end{proof}

The low and intermediate $L^2$ estimate of the Eisenstein function basically follows from the work of Joshi-S\'a Barreto \cite{JS1} and Mazzeo-Melrose \cite{MM}. We now briefly review some results to give the estimate, and refer the readers to the original papers for more details.

Theorem 7.1 of \cite{MM} tells that the resolvent $R(\la)$ has a meromorphic continuation from $\im\la < -\fracnt$ to $\mc\backslash \frac{i}{2}\mn$, with residues of finite rank operators. In particular, $R(\la) = R'(\la) + R''(\la)$ where $R'(\la)$ is a $0$-pseudodifferential operator of order $-2$ vanishing to infinity order at $L, R$ of $X\times_0 X$, and the Schwartz kernel of $R''(\la)$ trivialized by $dg(z')$ is 
\beq
R''(\la) = \rho_L^{\fracnt + i\la} \rho_R^{\fracnt + i\la} F(\la), 
\eeq
where $F$ is smooth in $\xo$ and meromorphic in $\la$. Here we used a different spectral parameter and trivialization of the density bundle as in \cite{MM}. By the definition of Eisenstein function (\ref{eisen}),
\begin{equation*}
\mce(\la)\rho'^b = \rho^{-i\la -\fracnt}R(\la)(\rho')^b\mid_{\rho = 0} = \rho^{-i\la - \fracnt}R''(\la)(\rho')^b\mid_{\rho = 0}.
\end{equation*}

On the blown-up space $\p X\times_0 X$, the lift of the Schwartz kernel with the density factor is
\begin{equation*}
\begin{split}
\tilde\beta_0^*(\mce(\la)\rho^b) & = (\rho_R\rho_\ff)^{-\fracnt - i\la}\rho_R^{\fracnt + i\la} \rho_L^{\fracnt + i\la} \rho_L^{b - n - 1}\rho_\ff^{-1}F(\la)\mid_R\\
& = \rho_{\tilde\ff}^{b -\fracnt - i\la -1} \rho_{\tilde L}^{b -\fracnt + i\la - 1}F(\la)\mid_{R}. 
\end{split}
\end{equation*}

Similar to the proof of Proposition \ref{eisen},  $\mce(\la)\rho^b$ is bounded from $\ltx$ to $L^2(\p X)$ for $b > \fracnt + |\im\la|$, and $\la$ in a compact set of $\mc$ free of resonances. We summarize the results to 
\begin{prop}\label{loweis}
There exists $\delta_0 > 0$ such that the Eisenstein function (\ref{eisen}) is holomorphic on $\Omega_0 \doteq \{\la\in\mc \backslash 0:  |\im\la| < \delta_0\}.$ For $b > \fracnt + |\im\la|$, $\mce(\la)\rho^b$ is bounded from $\ltx$ to $L^2(\p X)$. Moreover, for $\la$ in any compact set of $\Omega_0$, there exists $C > 0$ such that 
\beq
\|\mce(\la)\rho^b f\|_{L^2(\p X)} \leq C\|f\|_{\ltx}.
\eeq
\end{prop}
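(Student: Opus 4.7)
The plan is to combine the Mazzeo--Melrose resolvent structure with a Schur-type $L^2$ estimate in the spirit of Proposition~\ref{higheis}, and to handle the holomorphy separately using discreteness of resonances in $\mc\setminus \frac{i}{2}\mn$ together with Theorem~\ref{main1}. First I would invoke Theorem~7.1 of~\cite{MM} to write $R(\la) = R'(\la) + R''(\la)$, where $R'(\la) \in \Psi_0^{-2}(X)$ vanishes to infinite order at both $L$ and $R$ and the kernel of $R''(\la)$ (trivialized by $dg(z')$) has the factorized form $\rho_L^{\fracnt + i\la}\rho_R^{\fracnt + i\la}F(\la)$, with $F$ smooth on $\xo$ and meromorphic in $\la$. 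Since $R'$ vanishes at $L$, the restriction $\rho^{-\fracnt - i\la}R(\la)|_{\rho = 0}$ picks up no contribution from $R'$, so the analysis reduces to the single term $R''$.

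Next I would lift $\mce(\la)\rho^b$ to $\p X \times_0 X$ via $\tilde\beta_0$. Combining the asymptotics of $R''$ with $\tilde\beta_0^*\rho = \rho_{\tilde\ff}\rho_R$ (and the analogous formula for $\rho'$) yields a kernel of the shape
\beq
\tilde\beta_0^*(\mce(\la)\rho^b) = \rho_{\tilde\ff}^{b - \fracnt - i\la - 1}\rho_{\tilde L}^{b - \fracnt + i\la - 1}\, F(\la)|_{R},
\eeq
which is smooth up to the front face away from the boundary exponents. The hypothesis $b > \fracnt + |\im\la|$ ensures that both real exponents $b-\fracnt \pm \im\la - 1$ exceed $-1$, so the kernel is locally integrable against the lifted volume form. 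To derive the $L^2$ estimate I would then run Schur's lemma exactly as in the proof of Proposition~\ref{higheis}, computing the two sup-integrals in the projective coordinate systems~\eqref{D1} and~\eqref{D2}; the convergence conditions in each chart reduce precisely to $b > \fracnt + |\im\la|$. As $F|_R$ depends smoothly on $\la$ in any region free of resonances, the resulting constant is uniform on compact subsets of such a region.

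For the holomorphy claim I would combine three ingredients: (i) the Mazzeo--Melrose meromorphic continuation of $R(\la)$ to $\mc\setminus \frac{i}{2}\mn$, whose poles are locally finite; (ii) the fact that $\la\in\mbr\setminus\{0\}$ cannot be a pole, since the corresponding spectral value $\la^2 + \fnf$ lies strictly inside the continuous spectrum $[\fnf,\infty)$ of $\lap_g$, which admits no embedded eigenvalues in the asymptotically hyperbolic setting; and (iii) Theorem~\ref{main1}, which already provides a resonance-free strip for $|\re\la|$ large. Discreteness in the complementary compact window $\{|\re\la|\leq C_1,\ |\im\la|\leq \epsilon\}\setminus\{0\}$ forces the finitely many poles there to keep a positive distance from $\mbr\setminus\{0\}$; taking $\delta_0$ smaller than both this distance and the strip width from Theorem~\ref{main1} yields the desired $\Omega_0$. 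The origin must genuinely be excluded because $\la = 0$ corresponds to the bottom of the essential spectrum and is at best a branch point.

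The main obstacle, I expect, is the holomorphy argument: the Schur estimate itself is a mechanical repetition of Proposition~\ref{higheis}, but pinning down a uniform $\delta_0$ requires knowing both that the continuation has no real poles on $\mbr\setminus\{0\}$ (absence of embedded eigenvalues) and that accumulation can only occur at the excluded set $\frac{i}{2}\mn \cup \{0\}$. Once these facts are in place, the rest of the argument is purely computational.
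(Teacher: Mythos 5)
Your proposal matches the paper's proof: Mazzeo--Melrose decomposition $R = R' + R''$ with $R'$ dropping out on restriction to $\rho = 0$, the lift of $R''$ to $\p X\times_0 X$ producing exactly the stated kernel exponents $b - \fracnt \mp i\la - 1$, and Schur's lemma run in the two projective charts as in Proposition~\ref{higheis}, with holomorphy of $\mce(\la)$ on a strip coming from the meromorphic continuation of $R(\la)$, absence of embedded eigenvalues (\cite{Ma2}), discreteness of resonances, and the high-energy resonance-free region of Theorem~\ref{main1}. One minor inaccuracy: $\la = 0$ is excluded from $\Omega_0$ not because it is a branch point --- the continuation of \cite{MM} is holomorphic across $0$ modulo poles --- but because, as the paper notes after~\eqref{eq0expan}, $R(\la)$ and hence $\mce(\la)$ may have a pole there of order up to $2$.
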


\subsection{Asymptotics of the radiation fields. }
Let $\mcv(X)$ be the Lie algebra of smooth vector fields on $X$, and $\mcv_e(X) = \rho \mcv(X)$ be the Lie algebra of edge vector fields, see Mazzeo \cite{Ma2}. These are also called $0$-vector fields, see \cite{MM, M, MSV}. We need the definition of the weighted edge Sobolev space
\beq
\rho^b H_e^k(X) \doteq \{u = \rho^b v:  v \in \ltx,  V_1 V_2\cdots V_k v \in \ltx, \ \  V_i \in \mcv_e(X), 0\leq i \leq k\}.
\eeq

\begin{lemma}\label{eisenreg}
Assume $\{\la\in \mc : |\re\la | > C_2, |\im\la| < C_1\}$ as in Theorem \ref{main1} and $b > \fracnt + |\im\la|$. For $m \in \mn$ and $f\in \rho^bH_e^\infty(X)$, there exists a constant $C$ depending on $m$ and $f$ such that
\beq
\|\mce(\la) f\|_{L^2(\p X)} \leq C |\la|^{-m}.
\eeq
\end{lemma}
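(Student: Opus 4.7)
The plan is to combine the high-energy estimate of Proposition \ref{higheis} with an iterated resolvent identity that trades powers of $|\la|^{-1}$ for regularity of $f$. The key observation is that $P = \lap_g - \fnf$ is a second-order $0$-differential operator, i.e.\ a polynomial in the edge vector fields $\mcv_e(X) = \rho\,\mcv(X)$ with smooth coefficients on $X$. A direct computation shows that every $V \in \mcv_e(X)$ preserves $\rho^b H_e^\infty(X)$: writing $V = \rho W$ with $W$ smooth, one has $V(\rho^b g) = b \rho^{b-1}(V\rho) g + \rho^b Vg$, and since $V\rho \in \rho C^\infty(X)$, both terms lie in $\rho^b H_e^\infty(X)$. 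By iteration, $P^j f \in \rho^b H_e^\infty(X)$ for every $j \in \mn$, and I can write $P^j f = \rho^b g_j$ with $g_j \in H_e^\infty(X) \subset \ltx$.

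Starting from $R(\la)(P - \la^2) = \Id$, I would iterate to obtain
\begin{equation*}
R(\la) f = -\sum_{j = 0}^{k - 1} \la^{-2(j+1)} P^j f + \la^{-2k} R(\la) P^k f, \quad k \in \mn.
\end{equation*}
Applying $\rho^{-\fnt - i\la}(\,\cdot\,)\mid_{\rho = 0}$ term by term, each finite-sum contribution vanishes: since $|\rho^{-\fnt - i\la}| = \rho^{-\fnt + \im\la}$ and $P^j f \in \rho^b H_e^\infty(X)$ with $b - \fnt + \im\la > 0$ under the hypothesis $b > \fnt + |\im\la|$, the product $\rho^{-\fnt - i\la}P^j f$ vanishes at $\p X$ in the trace sense employed in the definition of $\mce(\la)$. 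The identity therefore collapses to
\begin{equation*}
\mce(\la) f = \la^{-2k}\, \mce(\la) P^k f.
\end{equation*}

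To close the argument I would apply Proposition \ref{higheis} to $P^k f = \rho^b g_k$, which gives
\begin{equation*}
\|\mce(\la) f\|_{L^2(\p X)} = |\la|^{-2k}\, \|\mce(\la)\rho^b g_k\|_{L^2(\p X)} \leq C_k\, |\la|^{-2k + \fnt - 1 - \kappa}\, \|g_k\|_{\ltx}.
\end{equation*}
Choosing $k \in \mn$ with $2k \geq m + \fnt - 1 - \kappa$ then yields the claimed bound $\|\mce(\la) f\|_{L^2(\p X)} \leq C|\la|^{-m}$, with the constant $C$ depending on $m$ and on finitely many edge Sobolev seminorms of $f$.

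The main obstacle I foresee is justifying the vanishing of the boundary trace $\rho^{-\fnt - i\la}P^j f\mid_{\rho = 0}$. Morally this is immediate from the positive weight $\rho^{b - \fnt + \im\la}$, but elements of $H_e^\infty(X)$ are not classically continuous up to $\p X$, so one must invoke an edge Sobolev embedding into a weighted $C^0$ (or Hölder) space, or argue directly by duality against $L^2(\p X)$ using the same density-bundle computations carried out in the proof of Proposition \ref{higheis}, in order to conclude that the trace in the sense of that proposition indeed vanishes.
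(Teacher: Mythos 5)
Your proposal is correct and follows essentially the same route as the paper: both iterate the identity $\la^2 R(\la) = R(\la)(\lap_g - \fnf) - \Id$, use that $\lap_g$ (equivalently its $\rho^b$-conjugate) is a second-order edge differential operator so that the iterates remain in $\rho^b H_e^\infty(X)$, and then invoke Proposition~\ref{higheis} to close the estimate. The boundary-trace vanishing you flag as a potential obstacle is also left implicit in the paper's own argument (the limit $\lim_{\rho\to0}\rho^{-i\la-\fnt+b}v=0$), so it is a shared technicality rather than a gap particular to your approach.
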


\bpf
Assume that $f = \rho^b v$. From the identity $R(\la)(\lap_g - \frac{n^2}{4} - \la^2) = \Id$, we get 
\beq
\la^2 R(\la) = R(\la)\lap_g - \frac{n^2}{4}R(\la) - \Id.
\eeq
Apply this to definition \eqref{eisen} of $\mce(\la)$, we have
\begin{equation}\label{eisenest2}
\begin{split}
\la^2 \mce(\la) \rho'^b v &= \lim_{\rho\rightarrow 0} \rho^{-i\la -\fracnt} R(\la)\lap_g \rho'^b v - \lim_{\rho\rightarrow 0} \rho^{-i\la -\fracnt} \frac{n^2}{4}R(\la)\rho'^b v - \lim_{\rho\rightarrow 0} \rho^{-i\la -\fracnt + b} v\\
& = \mce(\la)\rho'^b(\rho'^{-b}\lap_g \rho'^b v - \frac{n^2}{4} v). 
\end{split}
\end{equation}
The last term in the first line vanishes because $b > \fracnt + |\im\la|$.  Repeating this argument, we get for $k\in \mn$ that
\beq
\la^{2k}\mce(\la)\rho^b v = \mce(\la)\rho^b(\rho^{-b}\lap_g \rho^b  - \frac{n^2}{4})^k v.
\eeq
Notice that by our choice of the boundary defining function, $\rho = x$ when \eqref{prod} is valid. Using the expression of $\lap_g$ \eqref{eq4.1.1}, we see that
\beq
x^{-b} \lap_g x^b = -(x\frac{\p}{\p x} + b)^2 + n(x\frac{\p}{\p x} + b) -x\gamma (x\frac{\p}{\p x} + b) + x^2\lap_H.
\eeq
So $x^{-b} \lap_g x^b$ is an edge differential operator of order $2$ in the sense that it can be written as a finite sum of $V_i V_j$ with $V_i, V_j\in \mcv_e(X)$. Since $v\in H^\infty_e(X)$, by Proposition \ref{higheis} and \ref{loweis}, there exists a constant $C$ such that
\begin{gather*}
\|\mce(\la)\rho^b v\|_{L^2(\p X)} \leq C |\la|^{\fnt + \kappa - 1 - k}.
\end{gather*}
\epf

The definition of radiation fields is recalled in the introduction. In \cite{Sa}, S\'a Barreto proved  that the radiation field is related to the transposed Eisenstein function by 
\begin{equation}\label{radeis}
\int_\mr e^{-i\la s}\mcr_+(f_1, f_2)(s, y)ds = i\la \mce(\la)(f_2 + \la f_1). 
\end{equation}
This is valid for $\im\la < -\fracnt$ if $f_1, f_2 \in C^\infty_0(X)$. By Proposition \ref{higheis} and \ref{loweis}, the formula \eqref{radeis} remains valid for $\la$ in $\Omega_0 = \{\la\in\mc\backslash 0: |\im\la| < \delta_{0}\}$ with $f_1, f_2 \in \rho^b \ltx$ and $b > \fracnt + |\im\la|$. The right hand side of (\ref{radeis}) is holomorphic in $\Omega_0$, so is the left hand side. 

As mentioned in the introduction see also \cite{Sa}, the spectrum of $\lap_g$ consists of absolutely continuous spectrum $[\fnf, \infty)$ and finitely many eigenvalues in $(0, \fnf)$. So we can decompose $L^2(X) = L^2_{ac}(X)\bigoplus L^2_{pp}(X)$ according to the spectrum, where $L^2_{pp}(X)$ is spanned by the eigenfunctions and $L^2_{ac}(X)$ is the orthogonal complement. As a consequence of  Theorem 5.1 of \cite{Sa}, we have 
\beq
\|\mcr_\pm (0, f_2)\|_{L^2(\mbr_s\times \p X)} = \|f_2\|_{L^2(X)},  \ \ \forall f_2 \in L^2_{ac}(X).
\eeq

Finally we prove the main result about radiation fields.  
\begin{theorem}\label{main2}
Assume that $0 \leq \epsilon < \delta_{0}$ and $b > \fracnt + \epsilon$. For $f_1 \in \rho^bH_e^\infty(X)$ and $f_2 \in \rho^bH_e^\infty(X) \cap L_{ac}^2(X)$, the radiation field satisfies
\beq
\mcr_{+}(f_1, f_2)(s, y) = e^{-\epsilon |s|} \mcr_0(s, y), 
\eeq
where $\mcr_0(s, y) \in L^\infty(\mbr; L^2(\p X))$ and smooth in $s \in \mbr\backslash\{0\}$. 
\end{theorem}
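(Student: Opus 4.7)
The plan is to invert the Fourier transform identity \eqref{radeis} and shift the contour of integration from the real line into the strip $\{|\im\la|<\delta_0\}$. Once the shift is in place, the factor $e^{i\la s}$ on the contour $\{\im\la=\pm\epsilon\}$ produces the exponential decay $e^{-\epsilon|s|}$, and the remaining integral should be uniformly bounded in $L^2(\p X)$ by an $L^1$-in-$\la$ estimate of the integrand.

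First I would set $F(\la,y) = i\la\, \mce(\la)(f_2+\la f_1)$ and extend the identity \eqref{radeis} from $f_j\in C_0^\infty(\intx)$ to $f_j\in\rho^b H_e^\infty(X)$ (with $f_2\in L^2_{ac}(X)$) by density, combining the $L^2$ bounds on $\mce(\la)\rho^b$ from Propositions \ref{higheis} and \ref{loweis} with the Plancherel-type identity $\|\mcr_\pm(0,f_2)\|_{L^2(\mbr_s\times\p X)}=\|f_2\|_{L^2(X)}$. Fourier inversion then yields
\beq
\mcr_+(f_1,f_2)(s,y)=\frac{1}{2\pi}\int_\mbr e^{i\la s}F(\la,y)\,d\la.
\eeq

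Next I would establish two properties of $F(\cdot,y)$ viewed as an $L^2(\p X)$-valued function on $\{|\im\la|<\delta_0\}$: (i) holomorphy across the whole strip, noting that any a priori singularity at $\la=0$ is removed by the factor $\la$ together with $f_2\in L^2_{ac}(X)$, since residues of $\mce(\la)$ at $\la=0$ would come from $L^2$ eigenfunctions; and (ii) integrability of $\mu\mapsto \|F(\mu+i\epsilon',\cdot)\|_{L^2(\p X)}$ over $\mbr$ for each $\epsilon'\in[0,\delta_0)$. Property (ii) is proved by splitting in $|\mu|$: for $|\mu|$ bounded, Proposition \ref{loweis} supplies a uniform bound, while for $|\mu|\to\infty$ Lemma \ref{eisenreg} provides $\|\mce(\mu+i\epsilon')f_j\|_{L^2(\p X)}\le C_m|\mu|^{-m}$ for any $m$, which dominates the two polynomial factors of $\la$ in $F$.

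Then I would apply Cauchy's theorem in the rectangle $[-R,R]\times[0,\epsilon]$ for $s>0$: the horizontal sides contribute by (i) and (ii), while the vertical sides at $\re\la=\pm R$ vanish as $R\to\infty$ by the rapid decay of Lemma \ref{eisenreg}, uniformly in $\im\la\in[0,\epsilon]$. This produces
\beq
\mcr_+(f_1,f_2)(s,y)=\frac{e^{-\epsilon s}}{2\pi}\int_\mbr e^{i\mu s}F(\mu+i\epsilon,y)\,d\mu,\qquad s>0,
\eeq
and symmetrically for $s<0$ I would shift to $\im\la=-\epsilon$. Calling the remaining integral $\mcr_0(s,y)$, Minkowski's inequality combined with (ii) yields $\mcr_0\in L^\infty(\mbr_s;L^2(\p X))$. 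Smoothness of $\mcr_0$ on $\mbr\setminus\{0\}$ follows by differentiating under the integral: each $\p_s^k$ brings down $(i\mu)^k$, and Lemma \ref{eisenreg} still keeps $(i\mu)^k F(\mu\pm i\epsilon,\cdot)$ integrable with values in $L^2(\p X)$.

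The main obstacle I expect is the justification of the contour shift itself—specifically, verifying that $\la=0$ is actually a removable singularity for $\la\mapsto\la\mce(\la)f_2$ and $\la\mapsto\la^2\mce(\la)f_1$ under the spectral hypothesis $f_2\in L^2_{ac}(X)$, which requires correlating the possible singular behavior of $R(\la)$ at $\la=0$ with the point spectrum of $\lap_g$, and then confirming that the uniform decay in $|\re\la|$ needed to close the rectangle contour survives holomorphically across the whole strip (so that Lemma \ref{eisenreg} can be applied on lines $\im\la=\epsilon'$ and not only on $\mbr$). Once these two points are in place, the rest of the argument is essentially bookkeeping.
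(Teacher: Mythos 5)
There is a genuine gap at the step where you assert that $\la\mce(\la)f_2$ is holomorphic across the whole strip, and in particular at $\la=0$. The paper shows, via the spectral-theorem bound $\|R(\la)\|\sim 1/|\la|^2$ near $\la=0$ from the lower half-plane, that the pole of $R(\la)$ (hence of $\mce(\la)$) at $\la=0$ has order \emph{at most two}; a single factor of $\la$ therefore does not by itself cancel it. Your supplementary claim — that the residual order-one singularity is killed because $f_2\in L^2_{ac}(X)$ is orthogonal to the eigenfunctions generating the residue — is unjustified and likely false in general: for $P=\lap_g-\fnf$ the eigenvalues lie strictly in $(-\fnf,0)$, so $\la=0$ sits at the bottom of the continuous spectrum and is not an $L^2$ eigenvalue. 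Any nontrivial residue operator at $\la=0$ would be associated with zero-energy resonance states rather than with elements of $L^2_{pp}(X)$, so orthogonality of $f_2$ to $L^2_{pp}(X)$ does not annihilate it. You flag this as the "main obstacle," but the proposal leaves it open rather than closing it.

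The paper's route is specifically designed to avoid confronting the pole. It differentiates \eqref{eqrad1} once in $s$ so that the integrand becomes $\la^2\mce(\la)f_2$, which is genuinely holomorphic in the strip since $\la^2$ cancels the pole of order $\leq 2$; it then shifts the contour (using Lemma \ref{eisenreg} on the vertical segments $\Gamma_{\pm K}$, which matches the part of your argument that does work), obtaining $\p_s\mcr_+ = e^{-\epsilon s}\mcr_{0,+}$, and integrates back in $s$ to get $\mcr_+(0,f_2)(s,y)=\Psi(y)+e^{-\epsilon s}\Psi_{0,+}(s,y)$. The hypothesis $f_2\in L^2_{ac}(X)$ enters precisely here, not at the pole: it guarantees $\mcr_+(0,f_2)\in L^2(\mbr_s\times\p X)$ via the Plancherel-type identity $\|\mcr_\pm(0,f_2)\|_{L^2(\mbr_s\times\p X)}=\|f_2\|_{L^2(X)}$, which forces the integration constant $\Psi$ to vanish almost everywhere. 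To repair your argument, you would either need to prove independently that $\la\mce(\la)f_2$ has a removable singularity at $\la=0$ for $f_2\in\rho^b H_e^\infty\cap L^2_{ac}$ — a nontrivial structural fact about the resolvent at the spectral threshold — or adopt the paper's differentiate-shift-integrate scheme, which needs nothing about the pole beyond the order bound.
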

\begin{proof}
We will prove the theorem for $f_1 = 0$. The case when $f_1\neq0$ is similar. First, we take inverse Fourier transform in $\la$ of \eqref{radeis} to get  
\begin{equation}\label{eqrad1}
\mcr_+(0, f_2)(s, y) = \frac{1}{2\pi} \int_\mbr e^{is\la}\la \mce(\la)f_2 d\la.
\end{equation} 
By assumption, $\la \mce(\la)$ is holomorphic in $\Omega_0$ with only a possible pole at $\la = 0$. The order of the pole of $\mce(\la)$ at $\la = 0$ follows from that of $R(\la)$.  Here we use an argument in the Euclidean potential scattering, see  Section 2.10 of Melrose \cite{M}. Since $R(\la)$ is meromorphic  near $\la = 0$, we can write
\begin{equation}\label{eq0expan}
R(\la) = \sum_{j = 1}^{N_0} \frac{R_{-j}}{\la^j} + R_0(\la),
\end{equation}
where $R_0(\la)$ is holomorphic, and $R_{-j}$ are the residue operators with finite rank. From the spectral theorem, $R(\la)$ is bounded on $\ltx$ for $\im\la < 0, \la^2 \notin \text{spec}(\lap_g - \frac{n^2}{4})$, and
\begin{equation*}
\|R(\la)\|_{\ltx\rightarrow\ltx} = \frac{1}{\text{dist}(\la^2, \text{spec}(\lap_g - \frac{n^2}{4}))}.
\end{equation*}
Therefore, for $\im\la < 0$ near $0$, we have $\|R(\la)\| \sim 1/|\la|^2$. This means that in the expansion (\ref{eq0expan}), there are at most $2$ nontrivial singular terms $R_{-2}\la^{-2}$ and $R_{-1}\la^{-1}$. Hence $R(\la)$, as well as $\mce(\la)$, has a pole of order at most 2 at $\la = 0$.

\begin{figure}
\scalebox{0.9} 
{
\begin{pspicture}(0,-2.0)(10.68,2.0)
\rput(5.02,-1.0){\psaxes[linewidth=0.04,labels=none,ticks=none,ticksize=0.10583333cm]{->}(0,0)(-5,-1)(5,3)}
\psline[linewidth=0.04cm,arrowsize=0.05291667cm 2.3,arrowlength=2.0,arrowinset=0.1]{<-}(1.9,-0.52)(1.9,0.2)
\psline[linewidth=0.04cm](6.46,0.2)(8.38,0.2)
\psline[linewidth=0.04cm,arrowsize=0.05291667cm 2.3,arrowlength=2.0,arrowinset=0.1]{->}(8.38,-1.0)(8.38,-0.28)
\psline[linewidth=0.04cm,arrowsize=0.05291667cm 2.3,arrowlength=2.0,arrowinset=0.12]{->}(1.9,0.2)(6.7,0.2)
\psline[linewidth=0.04cm](8.38,0.2)(8.38,-0.28)
\psline[linewidth=0.04cm](1.9,-1.0)(1.9,-0.52)
\usefont{T1}{ptm}{m}{n}
\rput(5.32,0.5){$i\epsilon$}
\usefont{T1}{ptm}{m}{n}
\rput(5.27,-1.3){$o$}
\usefont{T1}{ptm}{m}{n}
\rput(2.01,-1.375){$-K$}
\usefont{T1}{ptm}{m}{n}
\rput(8.43,-1.375){$K$}
\usefont{T1}{ptm}{m}{n}
\rput(9,-0.415){$\Gamma_K$}
\usefont{T1}{ptm}{m}{n}
\rput(1.22,-0.415){$\Gamma_{-K}$}
\usefont{T1}{ptm}{m}{n}
\rput(5.7,1.915){$\im\la$}
\usefont{T1}{ptm}{m}{n}
\rput(10.59,-1){$\re\la$}
\end{pspicture} }
\caption{Contour deformation in the proof of Theorem \ref{main2}.}
\label{fig8}
\end{figure}
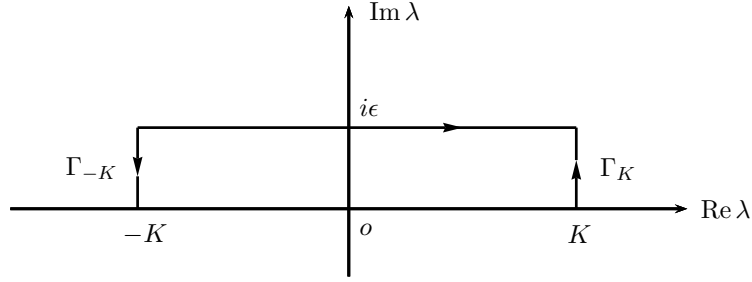

By taking one derivative in $s$ of \eqref{eqrad1}, we get 
\beq
\p_s\mcr_+(0, f_2)(s, y) = \frac{i}{2\pi} \int_\mbr e^{is\la}\la^2 \mce(\la)f_2 d\la,
\eeq
and the integrand now is holomorphic in the strip $|\im\la| < \delta_0$. We can change the contour of integration as in Figure \ref{fig8}. By Lemma \ref{eisenreg},  
\begin{equation*}
\begin{split}
\|\int_{\Gamma_K} e^{i\la s} \la^2\mce(\la)f_2 d\la\|_{L^2(\p X)} & = \|\int_0^\epsilon e^{iKs - ts} (K+ it)^2\mce(K + it)f_2 dt\|_{L^2(\p X)}\\
& \leq C e^{-\epsilon s} K^{-1} \rightarrow 0 \text{ as } K \rightarrow \infty.
\end{split}
\end{equation*}
The integral along $\Gamma_{-K}$ vanishes similarly. By Cauchy's theorem, 
\beq
\p_s\mcr_+(0, f_2)(s, y) = e^{-\epsilon s} \mcr_{0, +}(s, y), \text{ where } \mcr_{0, +}(s, y) = \frac{i}{2 \pi} \int_{-\infty}^\infty e^{i\la s}  (\la + i\epsilon)^2\mce(\la + i\epsilon)f_2 d\la.
\eeq
An application of Lemma \ref{eisenreg} shows that $\mcr_{0,+}(s, y) \in L^\infty(\mbr_s; L^2(\p X))$ and smooth in $s$. Integrating from $0$ to $s$, we get $\mcr_+(0, f_2)(s, y) = \Psi(y) + e^{-\epsilon s} \Psi_{0, +}(s, y)$ where $\Psi_{0, +}(s, y) \in L^\infty(\mbr_s; L^2(\p X))$ and smooth in $s$. However, we know that $\mcr_+(0, f_2) \in L^2(\mbr_s\times \p X)$ if $f_2\in L^2_{ac}(X)$. It is easy to see that $e^{-\epsilon s} \Psi_{0, +}(s, y) \in L^2([0, \infty)_s\times \p X)$. Therefore, the function $\Psi(y)$ has to vanish almost everywhere. So we have
\beq
\mcr_+(0, f_2)(s, y) = e^{-\epsilon s} \Psi_{0, +}(s, y).
\eeq 
On the other hand, we can deform the contour to the lower half plane and use the same argument to get $\mcr_+(0, f_2)(s, y) = e^{\epsilon s} \Psi_{0, -}(s, y)$. To finish the proof, we define $\mcr_0$ to be $\Psi_{0, +}$ for $s \geq 0$ and $\Psi_{0, -}$ for $s < 0$. 
\end{proof}

\appendix
\section{Positivity of the injectivity radius}
For any point $z$ on a general complete Riemannian manifold $(X, g)$, the injectivity radius $\inj(z)$ is defined as the supremum of $\delta$ such that the exponential map $\exp_z: B_\delta(0) \subset T_z X \rightarrow X$ is a diffeomorphism. The injectivity radius of $X$ is defined as $\inj(X) = \inf_{z \in X}\inj(z)$. It is well-known that for compact manifold, $\inj(X) > 0$, see e.g. 3.80 of \cite{GFL}.

\begin{lemma}\label{inject}
For $(\intx, g)$ asymptotically hyperbolic, the injectivity radius $\inj(\intx) > 0$.
\end{lemma}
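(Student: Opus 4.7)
The plan is to split $\intx$ into a compact interior part and a neighborhood of $\p X$, and bound $\inj$ below on each piece. Completeness of $(\intx, g)$ is standard for asymptotically hyperbolic metrics: the $g$-distance to $\p X$ is infinite, so any Cauchy sequence in $\intx$ stays bounded away from $\p X$ and converges in $\intx$.

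For the interior part, fix any $\delta > 0$. The set $K_\delta = \{x \geq \delta\}$ is a compact subset of $\intx$, and since the injectivity radius is a continuous function on a complete Riemannian manifold (a classical fact; see e.g.\ Cheeger--Ebin), $\inj_g$ attains a positive minimum on $K_\delta$. So it remains to produce a uniform positive lower bound on $\inj_g$ in the collar $\{x < \delta\}$ for some sufficiently small $\delta > 0$.

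For the end, I would use a Mobius chart rescaling adapted to the product form $g = (dx^2 + H(x, y, dy))/x^2$. At each $(x_0, y_0)$ in the collar with $x_0 > 0$ small, define $\Phi_{x_0, y_0}(X, Y) = (x_0 X, y_0 + x_0 Y)$. Using the quadratic dependence of $H$ on $dy$, one computes
\[
\Phi_{x_0, y_0}^* g \;=\; \frac{dX^2 + H(x_0 X, y_0 + x_0 Y, dY)}{X^2} \;=:\; g_{x_0, y_0},
\]
so the rescaling is an isometry onto its image. The chart domains exhaust the upper half-space $\mh := \{X > 0\} \times \mrn$ as $x_0 \to 0$, and on each compact subset of $\mh$ the metrics $g_{x_0, y_0}$ converge in $C^\infty$ to the constant-coefficient model $g_\infty = (dX^2 + H(0, y_\infty, dY))/X^2$, for any limit point $y_\infty \in \p X$ of $y_0$. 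After a linear change in $Y$ diagonalizing the constant form $H(0, y_\infty, \cdot)$, $g_\infty$ is the standard upper half-space model of hyperbolic space: complete, simply connected, of constant sectional curvature $-1$, hence $\inj_{g_\infty}(1, 0) = +\infty$.

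Now suppose for contradiction there is a sequence $z_k = (x_k, y_k)$ with $x_k \to 0$ and $\inj_g(z_k) \to 0$. After passing to a subsequence with $y_k \to y_\infty \in \p X$, the isometry property of $\Phi_{x_k, y_k}$ gives $\inj_{g_{x_k, y_k}}(1, 0) = \inj_g(z_k) \to 0$. Fix $R > 0$ arbitrary: the $g_\infty$-exponential map at $(1, 0)$ is a diffeomorphism of the $T_{(1,0)}\mh$-ball of radius $R$ onto a relatively compact subset of $\mh$. For $k$ large, this image lies inside the chart domain of $\Phi_{x_k, y_k}$, and $g_{x_k, y_k}$ is $C^\infty$-close to $g_\infty$ on a neighborhood of the image. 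By continuous dependence of solutions of the geodesic ODE on its coefficients, the $g_{x_k, y_k}$-exponential map at $(1, 0)$ is also a diffeomorphism on the ball of radius $R$ for $k$ large, forcing $\inj_g(z_k) \geq R$ and contradicting $\inj_g(z_k) \to 0$. The main obstacle will be making this stability argument fully rigorous — one must verify both absence of caustics and absence of short geodesic loops under only $C^\infty_{\mathrm{loc}}$ convergence on exhausting domains — but both follow from standard ODE continuous-dependence theorems once one checks, as the exhaustion property guarantees, that the relevant geodesic balls remain in the chart domain for $k$ sufficiently large.
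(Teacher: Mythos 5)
Your proof is correct and takes a genuinely different route from the paper's. Both arguments decompose $\intx$ into a compact core, where positivity of the injectivity radius follows from compactness, and a collar near $\p X$; the difference lies entirely in how the collar is treated. The paper works with curvature directly: since asymptotically hyperbolic metrics have sectional curvature tending to $-1$ at $\p X$, one fixes a collar $\{\rho < \epsilon_0\}$ on which the curvature is strictly negative, chooses $\delta_1$ so small that $\delta_1$-balls centered in $\{\rho < 2\epsilon_0/3\}$ stay inside that collar, and then invokes a conjugate-point comparison argument to conclude $\exp_z$ is a diffeomorphism on the $\delta_1$-ball. Your argument instead exploits the scaling symmetry of the normal form \eqref{prod} through the M\"obius rescalings $\Phi_{x_0,y_0}$, producing pointed $C^\infty_{\mathrm{loc}}$ limits equal to the exact hyperbolic half-space, and then runs a compactness/contradiction argument using stability of the exponential map (both nondegeneracy of $d\exp$ and injectivity on balls of fixed radius) under local $C^2$ convergence of metrics on exhausting chart domains. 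Your version is technically heavier at the stability step, as you yourself flag, but that step is standard; in exchange you avoid any explicit curvature estimate and work directly from the product structure of the metric. It is essentially the blow-up argument underlying Cheeger--Gromov-type compactness statements for asymptotically hyperbolic metrics, and so is arguably more robust, while the paper's version is shorter once the negative-curvature bound near $\p X$ is granted. Both are valid ways to handle the collar.
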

\begin{proof}
Let $\rho$ be a boundary defining function of $\p X$. In the following, we denote 
\beq
U_\epsilon = \{z \in \intx: \rho(z) < \epsilon\} \text{ and } U_\epsilon^c = \{z \in \intx: \rho(z) \geq \epsilon \}.
\eeq
Since $(\intx, g)$ is asymptotically hyperbolic, we can assume that the sectional curvature is less than $-\kappa_0 < 0$ in $U_{\epsilon_0}$, for some $\epsilon_0$ sufficiently small. 

We first prove that $\forall z \in U_{2\epsilon_0/3}$, $\inj(z) \geq \delta_1$  for some $\delta_1 > 0$. Since $\overline{U_{2\epsilon_0/3}}$ is closed and $U^c_\epsilon$ is compact, there exists some $\delta_1 > 0$ such that $\text{dist} (\overline{U_{2\epsilon_0/3}}, U^c_{\epsilon_0}) \geq \delta_1$.  Because the sectional curvature is negative in $U_{\epsilon_0}$, by the conjugate point comparison theorem Corollary 11.3 of \cite{L}, there cannot be more than one geodesics between any two points $z, z' \in U_{2\epsilon_0/3}$ that lies inside $U_{\epsilon_0}$. Therefore, for any vector $V \in T_z \intx$ with $\|V\| \leq \delta_1$, the exponential map is a  diffeomorphism. 

Now consider $U^c_{\epsilon_0/3} \subset X$ as a compact manifold. Then $\inj(U^c_{\epsilon_0/3}) \geq \delta_2$ for some $\delta_2 > 0$. Since $U^c_{\epsilon_0/3}$ and $U_{2\epsilon_0/3}$ cover $\intx$, we conclude that $\inj(\intx) \geq \min(\delta_1, \delta_2) > 0$. 
\end{proof}
%


\end{document}